\newtheorem{theorem}{Theorem}[section]
\newtheorem{thm}[theorem]{Theorem}
\newtheorem{prop}[theorem]{Proposition}
\newtheorem{lemma}[theorem]{Lemma}
\theoremstyle{definition}
\let\oldmarginpar\marginpar
\renewcommand\marginpar[1]{\-\oldmarginpar[\raggedleft\footnotesize #1]%
{\raggedright\footnotesize #1}}
\def\keywords#1{\bigskip \par\noindent{\it Keywords and phrases: }#1\par}
\def\AMS#1{\par\noindent{\it 2010 Mathematics Subject Classification: }#1\par}
\DeclareMathOperator{\R}{\mathbb{R}}
\DeclareMathOperator{\T}{\mathbb{T}}
\DeclareMathOperator{\Z}{\mathbb{Z}}
\DeclareMathOperator{\eps}{\epsilon}
\DeclareMathOperator{\supp}{\text{supp}}
\DeclareMathOperator{\ra}{\rightarrow}
\DeclareMathOperator{\half}{\frac{1}{2}}
\DeclareMathOperator{\M}{\mathcal{M}}
\title{A differentiation theorem for uniform measures}
\author{Marc Carnovale}
\date{\today}
\begin{document}

\maketitle


\begin{abstract}
Using the notion of higher-order Fourier dimension introduced in \cite{M2} (which was a sort of psuedorandomness condition stemming from the Gowers norms of Additive Combinatorics), 
we prove a maximal theorem and corresponding differentiation theorem for singular measures on $\R^d$, $d=1,2,\dots$. This extends results begun by Hardy and Littlewood for balls in $\R^d$
and continued by Stein \cite{stein} for spheres in $\R^{d\geq 3}$ and Bourgain for circles in $\R^2$, first considered for more general spaces in \cite{rubio}, and shown to hold for some
singular subsets of the reals for the first time in \cite{LabaDiff}. 

Notably, unlike the more delicate of the previous results on differentiation such as \cite{Bourgain} and \cite{LabaDiff}, the assumption of higher-order Fourier dimension subsumes all of
the geometric or combinatorial input necessary for one to obtain our theorem, and suggests a new approach to some problems in Harmonic Analysis.

\end{abstract}

\tableofcontents

\section*{Acknowledgments}

Many thanks to Prof.'s Izabella Laba and Malabika Pramanik for introducing me to the question of arithmetic progressions in fractional sets and differentiation problems
which motivated this work, 
for the sharing of their expertise in Harmonic Analysis, for funding my master's degree, and much more. 

Thanks to Nishant Chandgotia for his forgiveness of the mess I made of our office for two years and his bountiful friendship.

Thanks to Ed Kroc, Vince Chan, and Kyle Hambrook for the frequent use of their time and ears and their ubiquitous encouragement.


\noindent \keywords{Gowers norms, singular measures, uniformity norms, Salem sets, Hausdorff dimension, Fourier dimension, maximal estimates, differentiation theorems}
\vskip0.2in

\noindent \AMS{28A78, 42A32, 42A38, 42A45, 11B25, 26A24, 26A99, 42B25, 28A15}



\section{Introduction}
One of the significant result of 20th Century Harmonic Analysis were Stein's Spherical Maximal Theorem \cite{stein} and Bourgain's Circular Maximal Theorem \cite{Bourgain}, as well as their resulting 
differentiation theorems, which together state that if one takes
a sphere in $\R^d$, $d\geq 2$, translates it to the point $x$, scales it to have radius $r$, and averages the $L^p$ (for certain $p$) function $f$ over it, then as $r$ shrinks to zero, one recovers $f(x)$ almost
everywhere. Bourgain's result required additional geometric input and estimates beyond the Fourier/$L^2$ methods of Stein. Later, Rubio de Francia found a general 
differentiation theorem for measures with sufficient Fourier decay extending Steins, but not Bourgains, result, but the requirements of this theorem were too stringent to say anything about
whether a fractal on the real line could differentiate $L^p$ for any $p$. In 2011, \cite{LabaDiff} constructed sets and measures on the real line, of Hausdorff dimension strictly less than 1,
which nevertheless satisfy a maximal theorem and differentiation theorem, however no satisfying general theorem was avaialable. 

In the present work, we present a maximal and a differentiatin theorem for measures on $\R^d$, $d=1,2,\dots$, under an condition which generalizes Rubio de Francia's Fourier decay assumption.
This condition is that the measure possesses a higher-order Fourier dimension sufficiently close to that of the ambient space, 
which is a sort of psuedorandomness condition stemming from the Gowers norms of Additive Combinatorics. 

Inspired by Gowers' proof of Szemeredi's theorem, in \cite{M1} and \cite{M2} we developed a theory of Gowers uniformity norms for singular measures on the torus and a notion of higher
order Fourier dimension for such measures.

The main definitions were the following.

Given a measure $\mu$ on $\T^d$, we define the measure $\triangle^k\mu$ on $\T^{(k+1)d}$ iteratively by setting $\triangle^0\mu=\mu$ and 

\begin{align}
 \int f \,d\triangle^k\mu = \lim_{n\ra\infty} \int \Phi_{n}\ast\triangle^{k-1}\mu(x-u_k;u')\,d\triangle^{k-1}\mu(x;u')\,du_k
\end{align}

We then defined the $U^k$ norm of $\mu$ and showed it to be equivalent to
\begin{align}
 \|\mu\|_{U^k} = |\triangle^k\mu(\T^{(k+1)d})|^{\frac{1}{2^k}}
\end{align}
and introduced the $k$th order Fourier dimension of $\mu$ as be the supremum over all $\beta\in(0,d)$ for which

\begin{align}\label{intermsof1}
 |\widehat{\triangle^j\mu}(0;\eta)|\leq C_F (1+|\eta|)^{-(j+1)\frac{\beta}{2}}\hspace{15pt} \forall\hspace{3pt} 1\leq j \leq k
\end{align}

One obtains bounds on the $U^k$ norm in terms of (\ref{intermsof1}), and in fact much stronger information. Let $\phi_n$ be some approximate identity on $\R^d$ (for terminology, see the introduction ot \cite{M1}.
Set $\mu_n=\phi_n\ast\mu$. In \cite{M2}, we proved (a statement obviously equivalent to via the triangle inequality) the following

\begin{prop}[Proposition 2 of \cite{M2}]\label{thm:rk}
 Let $\mu$ be a finite compactly supported (Radon) measure on $\R^d$ with a higher order Fourier decay given by (\ref{intermsof1}). Then setting

 \begin{align*}
  r_k:= \bigg(\prod_{j=3}^k \left[2-\frac{{2^{3j-2}}}{2^{3j-2}-[1-\frac{(j+1)\beta}{jd}]}\right]\bigg) (2\beta-d) \end{align*}

we have the bound

\begin{align}
 \|\mu_{n+1}-\mu_n\|_{U^k}\leq C 2^{-\frac{r_k}{2^k}n}
\end{align}
where the constant depends only on the choice of $\phi_n$ and the  constant $C_F$.

\end{prop}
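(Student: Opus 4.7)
The plan is to proceed by induction on $k$, combining a Fourier-analytic representation of the $U^k$ norm with the frequency-localization of $\nu_n := \mu_{n+1} - \mu_n$. Since $\phi_n$ is an approximate identity, $\widehat{\phi_{n+1}} - \widehat{\phi_n}$ acts essentially as a Littlewood--Paley projection onto the dyadic shell $|\xi| \sim 2^n$, so $\widehat{\nu_n}$ is concentrated there. Unfolding the iterative definition of $\triangle^k \nu_n$ and applying Plancherel in the last convolution variable, one rewrites $\|\nu_n\|_{U^k}^{2^k}$ as an $L^2$-type integral over $\eta$ involving $\widehat{\triangle^{k-1} \nu_n}(0; \eta)$; this reduces the problem to Fourier-transform bounds on the iterated $\triangle$-measure evaluated at the origin in the first slot.

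The base case $k=2$ follows directly from the $j=1$ hypothesis $|\widehat{\triangle \mu}(0; \eta)| \leq C_F (1+|\eta|)^{-\beta}$ combined with the cutoff from $\widehat{\phi_{n+1}} - \widehat{\phi_n}$: splitting the resulting Fourier integral in $\eta$ at the scale $|\eta| \sim 2^n$, on the low-frequency side the $\beta$-decay controls the integrand while one integrates over a ball of volume $\sim 2^{dn}$, and on the high-frequency side the cutoff itself decays. Balancing the two contributions produces the exponent $(2\beta-d)n$ in $\|\nu_n\|_{U^2}^4 \lesssim 2^{-(2\beta-d)n}$, matching the $(2\beta-d)$ factor in $r_k$.

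For the inductive step from level $k$ to level $k+1$, I would apply a Gowers--Cauchy--Schwarz-type inequality for measures (obtained by unpacking the averaging in the definition of $\triangle^{k+1}$) to bound $\|\nu_n\|_{U^{k+1}}^{2^{k+1}}$ by a geometric mean of the $U^k$ quantity, supplied by the induction hypothesis, and a higher-order Fourier integral controlled by the $j = k+1$ case of (\ref{intermsof1}). Splitting this second integral at a dyadic cutoff radius $R = 2^{sn}$ and optimizing $s$ yields the reduction factor $2 - \tfrac{2^{3(k+1)-2}}{2^{3(k+1)-2} - [1 - (k+2)\beta/((k+1)d)]}$; compounding these across $j = 3, \dots, k$ then produces the product appearing in the statement of $r_k$.

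The main obstacle is twofold. First, $\nu_n$ is a signed measure rather than positive, so the usual positivity-based manipulations of $\|\mu\|_{U^k}$ must be replaced by Cauchy--Schwarz inequalities, and the resulting loss of a square root is precisely what forces each interpolation factor to be strictly less than $1$. Second, the bookkeeping through the iterated convolution defining $\triangle^k$ is delicate: each level introduces a new variable and the Fourier decay hypothesis (\ref{intermsof1}) is used at a specific nesting depth, so one must track carefully how the cutoffs $\widehat{\phi_{n+1}} - \widehat{\phi_n}$ propagate through the iterated convolution structure of $\triangle^k \nu_n$. Closing the induction requires verifying that the compounded factors keep $r_k > 0$ whenever $\beta$ is sufficiently close to $d$, which is exactly the regime needed for the subsequent maximal and differentiation theorems.
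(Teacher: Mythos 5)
The paper itself contains no proof of this proposition: it is imported verbatim as Proposition 2 of \cite{M2}, so there is no in-paper argument to measure your proposal against. Judged on its own terms, your sketch has a plausible skeleton --- induction on the order of the norm, a Gowers--Cauchy--Schwarz step relating $\|\nu_n\|_{U^{k+1}}$ to $\|\nu_n\|_{U^k}$ together with the decay hypothesis (\ref{intermsof1}) at the next nesting depth, and a frequency cutoff at $|\eta|\sim 2^{sn}$ optimized in $s$ --- and your base case, $\|\nu_n\|_{U^2}^{4}\lesssim 2^{-(2\beta-d)n}$, is consistent with $r_2=2\beta-d$ (the empty product). But the inductive step is where all of the quantitative content of the statement lives, and there you only assert, rather than derive, that the optimization ``yields the reduction factor'' $2-\frac{2^{3j-2}}{2^{3j-2}-[1-\frac{(j+1)\beta}{jd}]}$. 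The constant $2^{3j-2}$ encodes exactly how many Cauchy--Schwarz applications are made at stage $j$ and with what exponents; without an explicit computation producing it, your argument establishes at best that \emph{some} positive decay rate propagates up the induction, not the stated value of $r_k$. There is also an index slip: the paper applies the proposition so that the $U^k$ bound is fed by the $(k-1)$-st order decay, so your step to $U^{k+1}$ should consume at most the $j=k$ case of (\ref{intermsof1}), not $j=k+1$.

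A concrete sign that the optimization has not actually been carried out is your claim that each interpolation factor is ``strictly less than $1$.'' Writing the factor as $\frac{a-2b}{a-b}$ with $a=2^{3j-2}$ and $b=1-\frac{(j+1)\beta}{jd}$ shows it exceeds $1$ precisely when $\beta>\frac{jd}{j+1}$, which is the regime ($\beta$ close to $d$) the proposition is designed for --- indeed this is why the paper can remark that $r_k$ increases with $\beta$. Your heuristic about losing square roots therefore does not match the formula you are trying to prove. To close the gap you would need to either reproduce the computation from \cite{M2} or restructure the inductive step so that the factor is derived from the Cauchy--Schwarz bookkeeping rather than reverse-engineered from the statement.
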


In particular, note that $r_k=r_k(\beta)$ increases as $\beta$ increases and is positive for $\beta$ close enough to $d$, as one would expect.

In this paper, we show that the structural control afforded by a high $k$th order Fourier dimension can be used to yield the conclusion that such measures differentiate $L^p$ for sufficiently large $p$. 
This allows differentiation theorems somewhat akin to Rubio de Francia's \cite{rubio} with lower demands on the dimension (but in general this is a higher-order rather than classical Fourier dimension).
One sense in which this is an improvement on Rubio de Francia's result is that we obtain differentiation theorems for certain objects of (classical) Fourier dimensions exactly equal to $1$. 

We believe that this work clarifies results in \cite{LabaDiff}, where it was first shown that sets and measures of fractional dimension in $\R^1$ may differentiate $L^p$.

Our approach has strong parallels both to \cite{Bourgain} and \cite{LabaDiff}, but puts the analogs of the bounds on ``internal tangencies'' and ``transverse intersections''
behind each of those directly into a framework of multilinear estimates, which ultimately rely only on nothing other than (higher-order) Fourier decay. We control the ``internal tangency'' portion (see (\ref{Ommeggga})) of the argument by combining a universal argument counting the size of translation parameters (cf. Lemma \ref{thm:internals})
which can be close together along with a trivial bound on the size of $|\mu_n|$ coming from the Hausdorff dimension of $\mu$ ( it is technically convenient, though it turns out not necessary, to assume that $\mu$ obeys the ball conditional
\begin{align}\label{ballcondition}
 \mu(B(x,r)) \leq C_H r^{\alpha}
\end{align}
for some $\alpha\in (0,d)$)

The more interesting ``transverse intersection'' portion (see (\ref{omeggga})) of the argument can be phrased as a count of certain linear patterns weighted by $\mu$, which is precisely the sort of thing which Gowers
norms were introduced to control. Here our (higher order) Fourier dimension assumption and the work done in \cite{M2} comes in, providing the needed estimates. (see Lemma \ref{thm:transverse}).

In more detail, we show  that measures of $k-1$-st order Fourier dimension $\beta$ slightly larger than $\frac{k-1}{k}d$ and satisfying (\ref{ballcondition}) for large enough 
$\alpha$ differentiate $L^p(\R^d)$ for $p > \frac{k}{k-1}$, in the sense that for $f\in L^p$, $\lim_{r\ra\infty} \int f(x+ry)\,d\mu(y)\ra f(x)$ for a.e. $x$. 
To show this, we follow what is now a standard approach to reduce the problem to a more congenial one. 

Define $\tilde{\mathcal{M}} f(x) := \sup_{t>0} \int f(x+ty) \, d\mu(y)$. 
In order to prove the differentiation result, it is enough to show the following theorem.

\begin{thm}\label{thm:differential}
 Let $\mu$ be a measure. Then there is a $\beta_0\in(0,d)$ so that if the $k-1$-st order Fourier dimension of $\mu$ is greater than  $\beta_0$, then $\tilde{\mathcal{M}}$ is bounded on $L^p$ for $p>\frac{k}{k-1}$.
\end{thm}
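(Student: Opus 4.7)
The plan is to follow the standard linearize--Littlewood-Paley--per-scale decay framework used by Bourgain and \L aba-Pramanik, with the per-scale decay supplied by the $U^{k-1}$ norm bound of Proposition \ref{thm:rk}. First I would linearize: choosing a measurable dilation $t: \R^d \to (0,\infty)$, it suffices to bound the $L^p$ norm of $M^t f(x) := \int f(x + t(x)y)\,d\mu(y)$ uniformly in $t$. Standard dyadic discretization lets me assume $t(x) \in \{2^m : m \in \Z\}$ in a fixed finite range, at the cost of a logarithmic factor that will be absorbed into the geometric summation below. Then I apply a Littlewood-Paley splitting $\mu = \mu_0 + \sum_{n \geq 0} \nu_n$ with $\nu_n := \mu_{n+1}-\mu_n$: the $\mu_0$ piece is dominated by the Hardy-Littlewood maximal operator, so it remains to prove
\begin{align*}
 \|M^t_{\nu_n} f\|_{L^p} \leq C\, 2^{-\delta n}\|f\|_{L^p}
\end{align*}
for some $\delta > 0$ and $p > k/(k-1)$, and sum in $n$.

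For the per-scale estimate I would split the integration into the \emph{internal tangency} region $\Omega$, consisting of pairs where the dilations $t(x)$ and $t(y)$ are dyadically very close, and its complement $\Omega^c$, the \emph{transverse} region. On $\Omega$, the crude pointwise bound $\|\mu_n\|_\infty \lesssim 2^{(d-\alpha)n}$ furnished by the ball condition (\ref{ballcondition}) combines with the counting estimate of Lemma \ref{thm:internals}, which bounds the $\mu$-mass of translation parameters that can be simultaneously close, to yield an acceptable bound once $\alpha$ is sufficiently large. On $\Omega^c$ I would raise $\|M^t_{\nu_n}f\|_p^p$ to an appropriate integer power, dualize, and apply Cauchy-Schwarz $k-1$ times in the spirit of the Gowers inverse machinery to unfold the resulting expression into a multilinear form against $\nu_n$ whose structure mirrors the recursive definition of $\triangle^{k-1}\nu_n$. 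By positivity and the identity $\|\nu_n\|_{U^{k-1}}^{2^{k-1}} = \triangle^{k-1}\nu_n(\T^{kd})$, this multilinear form is controlled by a power of $\|\nu_n\|_{U^{k-1}}$, and Proposition \ref{thm:rk} applied with $k$ replaced by $k-1$ then gives $\|\nu_n\|_{U^{k-1}} \leq C\, 2^{-r_{k-1}n/2^{k-1}}$ provided $\beta$ is close enough to $d$; this is what fixes $\beta_0$. Optimizing the threshold that separates $\Omega$ from $\Omega^c$ balances the internal and transverse contributions, and the $k-1$ Cauchy-Schwarz unfoldings restrict us to the range $p > k/(k-1)$.

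The main obstacle will be organizing the Cauchy-Schwarz unfolding on $\Omega^c$ so that the emerging multilinear object actually matches the iterated translation/convolution structure of $\triangle^{k-1}\nu_n$ defining the $U^{k-1}$ norm. The natural move is to pair $\mu$-integrals in variables $y, y'$, use identities of the shape $f(x+t(x)y)\overline{f(x+t(x)y')} = F_{y-y'}(x+t(x)y')$ to reduce the effective number of $f$-factors, and iterate; the delicate point is absorbing the linearizing dilation $t(x)$ into the translation structure, which should be feasible on $\Omega^c$ precisely because the dilations are dyadically separated there. A secondary, more routine issue is that the reduction from maximal to linearized operator and the dyadic discretization of $t$ must together lose only a polynomial-in-$n$ factor, so that the geometric gain $2^{-\delta n}$ survives summation.
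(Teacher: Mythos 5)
Your single-scale engine is essentially the paper's: linearize the dilation, dualize, reduce by restricted-type interpolation to a $k$-fold multilinear form, split into an ``internal tangency'' region handled by the $L^\infty$ bound coming from (\ref{ballcondition}) together with Lemma \ref{thm:internals}, and a transverse region unfolded by iterated Cauchy--Schwarz into a Gowers-norm bound fed by Proposition \ref{thm:rk}. (Your bookkeeping --- $k$ factors, $k-1$ applications of Cauchy--Schwarz, control by $\|\nu_n\|_{U^{k-1}}$ --- is in fact the count that matches the hypothesis ``$(k-1)$-st order Fourier dimension'' in the statement, whereas the proof of Theorem \ref{thm:d11} writes $U^k$ at the corresponding step. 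One small correction: the relevant separation in the tangent/transverse dichotomy is between the translation parameters $x_i$, not between the dyadic sizes of the dilations $t(x_i)$, since it is the componentwise ratios $b_i=x_i/x_1$ that must be $1$-separated for Lemma \ref{thm:transverse} to apply.)

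The genuine gap is the passage from the single-scale operator $\mathcal{M}f=\sup_{1\le t\le2}\int f(\cdot+ty)\,d\mu(y)$ to the full operator $\tilde{\mathcal{M}}$ with $\sup_{t>0}$. You propose to dyadically discretize $t$ ``in a fixed finite range'' at the cost of a logarithmic factor, but the range of relevant dyadic scales is infinite, so a crude supremum over blocks $t\approx2^{-\nu}$ loses an unbounded factor rather than a logarithmic one; and within a single block no discretization is needed once the operator is linearized. What is actually required --- and what Section \ref{ch:scaling} supplies --- is a second Littlewood--Paley decomposition, this time of $f$ into martingale differences $\triangle_j f=\mathbb{E}_{j+1}f-\mathbb{E}_j f$: for $t\approx2^{-\nu}$ the pieces with $j$ far below $\nu$ are dominated by the Hardy--Littlewood maximal function (Lemma \ref{thm:scale1}), the pieces with $j$ far above $\nu$ gain from the smoothness of $\mu_k$ at the matching scale (Lemmas \ref{thm:scale2} and \ref{thm:scale3}), and only the diagonal $j\approx\nu$ consumes the single-scale estimate with its exponential decay; the sum over $\nu$ is then closed by a square-function/Minkowski argument and Littlewood--Paley theory (Lemma \ref{thm:scale5}). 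Your proposal contains no substitute for this mechanism, and without it the per-scale bound you prove does not by itself yield boundedness of $\tilde{\mathcal{M}}$.
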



Theorem \ref{thm:differential} will follow from Theorem \ref{thm:sooomany} at the end of this section, since standard arguments allow us to replace the supremum over $t>0$ with a supremum over the single scale $1\leq t\leq2$  and so work with  $\mathcal{M}f(x) := \sup_{t\in(1,2)} \int f(x+ty)\,d\mu(y)$. For completeness, we include these arguments in Section \ref{ch:scaling} below.

Choose $\phi$ a radially symmetric Schwartz function of integral 1  which is equal to the identity in a neighborhood of $0$, set $\phi_N (x) = 2^{-N} \phi(2^{-N}x)$ and 
define $\psi_N$ so that $\psi_N = \phi_{N+1}-\phi_{N}$. 
Set $\mathcal{M}_n  : = \sup_{t\in[1,2]} \int f(x+ty) \,d\mu_n(y)$ where $\mu_n = \phi_n\ast\mu$.

It is clear that $\mathcal{M}$ is bounded if both  $\mathcal{M}_0: f\mapsto \sup_{1\leq t \leq2} \int f(\cdot +ty) \,d\mu_0(t)$, with $\mu_0 = \psi_1\ast\mu$, and $\sum_{n>0} \mathcal{M}_n$ are  bounded; that the former is bounded is  a straightforward consequence of the differentiation theorem of Hardy and Littlewood.  So we study the operators $\mathcal{M}_n$.

\section{A multilinear estimate and uniformity norm}

Our proof of Theorem \ref{thm:d11} (in which we prove the meat of the main theorem) invokes the following lemma in order to utilize our assumptions on the $k$th order Fourier dimension of
the measure $\mu$.

First, we must recall the following facts.

The operator $\triangle^k : L^{\infty}(\R^d)\mapsto L^{\infty} (\R^{(k+1)d})$ is given by
\begin{align}
 \triangle^k f (x;u) = \prod_{\iota\in\{0,1\}^k} f(x-\iota\cdot u) = \triangle^{k-1}f(x-u_k;u')\triangle^{k-1}f(x;u')
\end{align}

In \cite{M1}, we showed that  when $\mu$ is a function $f$, the $U^k$ norm of $\mu$ is the same as $\|f\|_{U^k} = \left(\int \triangle^k f(x;u)\,dx\,du\right)^{\frac{1}{2^k}}$.

For vectors $v$ and $u$, define $vu := (v_1u_1,\dots,v_du_d)$. 

We then have the following lemma.

\begin{lemma}\label{thm:transverse}
Let $f_i$, $i=0,\dots,k$ be bounded functions and $b_i\in \R^d$ distinct vectors with $|b_i-b_j|\geq 1$ for $i\neq j$. Then 
\begin{align}
& \int_{[0,1]^d}\int_{[0,1]^d} \prod_{i=0}^{k} f_i(x-b_ir)\,dx\,dr\\\leq&
 \left(\prod_{i=0}^{k-1}\|f_i\|_{\infty}\right)\|f_k\|_{U^{k+1}}
\end{align}
\end{lemma}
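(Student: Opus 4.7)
The plan is to prove the lemma by a Gowers--Cauchy--Schwarz (generalized von Neumann) argument: apply Cauchy--Schwarz $k+1$ times in sequence to peel off $f_0,\dots,f_{k-1}$ one at a time while doubling the remaining integrand with a fresh shift variable at each step, eventually producing a Gowers-box integral of $2^{k+1}$ copies of $f_k$ that equals $\|f_k\|_{U^{k+1}}^{2^{k+1}}$.

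First I would perform a shear substitution $x\mapsto x+b_k r$ to recenter the configuration on $f_k$, so that the integrand becomes $f_k(x)\prod_{i<k} f_i(x+c_i r)$ with $c_i := b_k-b_i$ nonzero and pairwise separated by at least $1$. The substitution is volume-preserving, and the image of $[0,1]^{2d}$ is bounded with volume $O(1)$, which is all one needs in what follows.

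Next I would apply Cauchy--Schwarz iteratively. At each step a linear change of variables turns the argument of the function to be eliminated into one of the integration variables; that function is pulled out at the cost of $\|f_j\|_\infty$ (using boundedness of the domain to convert the $L^2$-bound provided by Cauchy--Schwarz into an $L^\infty$-bound); and the resulting square is expanded by a fresh $d$-dimensional shift variable $h_j$, which doubles the remaining product. The separation hypothesis $|b_i-b_j|\geq 1$ is crucial here: it keeps the coefficients appearing in the successive linear substitutions uniformly bounded away from zero, so that the Jacobians and image volumes stay of order $O(1)$ independently of the $b_i$. After $k$ such steps $f_0,\dots,f_{k-1}$ have all been eliminated and $2^k$ copies of $f_k$ remain alongside the original $r$-variable; one further Cauchy--Schwarz applied directly to $r$ doubles the integrand one last time, yielding $2^{k+1}$ copies of $f_k$ at positions of the form $y-\iota\cdot u$, $\iota\in\{0,1\}^{k+1}$, with $u=(u_1,\dots,u_{k+1})$ built as linear combinations of $r$ and the $h_j$. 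By the defining identity $\|f_k\|_{U^{k+1}}^{2^{k+1}}=\int \prod_\iota f_k(y-\iota\cdot u)\,dy\,du$, this surviving integral equals $\|f_k\|_{U^{k+1}}^{2^{k+1}}$; taking $(2^{k+1})$-th roots and collecting the $L^\infty$ factors gives the claimed bound.

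The main obstacle I anticipate is verifying that, after all the substitutions, the $2^{k+1}$ shift positions assemble into an honest Gowers box (rather than merely an affinely equivalent configuration), so that the surviving integral matches the definition of $\|f_k\|_{U^{k+1}}$ on the nose. This is exactly where the quantitative separation $|b_i-b_j|\geq 1$, rather than mere distinctness, is used: the shifts $u_j$ arise through a linear system whose coefficients are the differences $c_i-c_j=b_j-b_i$, and uniform nondegeneracy of this system is required to invert it and identify the resulting configuration with the Gowers-box pattern.
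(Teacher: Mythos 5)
Your proposal is correct and follows essentially the same route as the paper: an iterated Gowers--Cauchy--Schwarz argument that peels off $f_0,\dots,f_{k-1}$ one at a time at the cost of $\|f_j\|_\infty$, doubles the surviving product with a fresh shift variable at each step, uses $|b_i-b_j|\geq 1$ to keep the Jacobians of the linear substitutions harmless, and identifies the final integral of $2^{k+1}$ copies of $f_k$ with $\|f_k\|_{U^{k+1}}^{2^{k+1}}$. Your accounting of the last doubling via the $r$-integral is if anything a cleaner way to see why the exponent lands at $k+1$ rather than $k$.
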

\begin{proof}

For $u\in (\R^d)^n$, define the operator $B_{j}u =( (b_j-b_0)u_1,\cdots,(b_j-b_n)u_n)$.

We claim that 

\begin{align}\label{twobt}
& \int\int \prod_{i=j}^{k} \triangle^j f(x-(b_i-b_{j-1})r;B_iu)\\\leq&
 \|\triangle^j f_j\|_2 [\int\int \prod_{i=j+1}^{k} \triangle^{j+1}f(x-(b_i-b_{j})r;B_iu)]^{\half}
\end{align}
from which the lemma follows via induction since  $\|\triangle^j f_j\|_{2}\leq \|f_j\|_{\infty}^{2^j}$.

To obtain (\ref{twobt}), we apply Cauchy-Schwarz. Sending $x\mapsto x + (b_j-b_{j-1})r$, we have 
\begin{align}
(\ref{twobt})=&
 \int \triangle^j f_j(x;B_ju) [\int\prod_{i=j+1}^{k} \triangle^j f(x-(b_i-b_{j})r;B_iu)\,dr]\,dx\,du\\\leq&\label{twob2}
 (|b_j-b_{j}|\cdots|b_j-b_0|)^{-\half}\|\triangle^j f_j\|_2 [\int|\int \prod_{i=j+1}^{k} \triangle^{j}f(x-(b_i-b_{j})r;B_iu)\,dr|^2\,dx\,du]^{\half}
\end{align}
and the $|b_j-b_n|^{-\half}$ factors may be dropped since $|b_j-b_i|\geq 1$ by assumption.

Expanding the square in the integral, calling $u_{j+1}$ the variable of integration of the second copy of the integral over $r$ and then changing variables $u_{j+1}\mapsto u_{j+1}+r$, this becomes

\begin{align}
(\ref{twob2})=&
\|\triangle^j f_j\|_2 [\int\int \prod_{i=j+1}^{k} \triangle^{j}f(x-(b_i-b_{j})r;B_iu)\\\cdot&
\triangle^{j}f(x-(b_i-b_{j})r-(b_i-b_{j})u_{j+1};B_iu)\,dr\,dx\,du\,du_{j+1}]^{\half}\\=&
  \|\triangle^j f_j\|_2 [\int\int   \prod_{i=j+1}^{k} \triangle^{j+1}f(x-(b_i-b_{j})r;B_iu,(b_i-b_j)u_{j+1})\,dr\,dx\,du\,du_{j+1}]^{\half}
\end{align}
which is what we claimed.

So by induction, we have

\begin{align}
 \int\int \prod_{i=0}^{k} f(x-b_ir)\\\leq&
 \left(\prod_{i=0}^{k-1}\|\triangle^if\|_{2}^{2^{-i}}\right) [\int \triangle^k f_k(x;B_ku)\,dx\,dr\,du]^{\frac{1}{2^k}}
\end{align}

and a change of variables gives the result. 

\end{proof}

\section{Estimates for the restricted operator}\label{ch:Differentiation}
Our goal is to show that $\mathcal{M}_n$ has $L^p\ra L^p$ mapping norm bounded by $C 2^{-n}$. As in \cite{Bourgain} and later in \cite{LabaDiff}, we use duality to recast this as follows. 

\begin{lemma}\label{thm:D1}
With $\mu$, $\mu_n$, $\mathcal{M}_n$ as above and $q=p'$, we have

\begin{align}\label{7.1}
&\|\mathcal{M}_n\|_{L^p\ra L^p} = \sup_{\|g\|_{q} = 1} (\int\sup_{t\in(1,2)} |\int g(x-ty) \, d\mu_n(x)|^{q}\,dy)^{\frac{1}{q}}	
\end{align}

\end{lemma}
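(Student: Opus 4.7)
The identity in Lemma \ref{thm:D1} is the standard $L^p$–$L^q$ duality relating the norm of the maximal operator $\mathcal{M}_n$ to that of its ``adjoint maximal operator''
$T_n g(y) := \sup_{t\in(1,2)} |\int g(x-ty)\,d\mu_n(x)|$, analogous to the duality step used by Bourgain in \cite{Bourgain} and by {\L}aba--Pramanik in \cite{LabaDiff}.

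The plan is to prove one direction by linearization and change of variables, and the other direction by a symmetric argument. First I would take the standard $L^p$ duality
$\|\mathcal{M}_n f\|_p = \sup_{\|g\|_q=1}|\int g\cdot \mathcal{M}_n f\,dx|$ and restrict to $f,g\geq 0$ (this is legitimate since $|\mathcal{M}_n f|\leq \mathcal{M}_n |f|$ and $\mathcal{M}_n f$ is nonnegative when $f,\mu_n$ are). Then I would linearize the sup in $\mathcal{M}_n f$ via measurable selection: for any $\epsilon>0$ there exists a measurable $t:\R^d\to(1,2)$ so that $\mathcal{M}_n f(x)\leq \int f(x+t(x)y)\,d\mu_n(y)+\epsilon$ pointwise. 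Substituting and applying Fubini gives
\begin{align*}
\int g(x)\mathcal{M}_n f(x)\,dx \leq \int d\mu_n(y)\int g(x)\,f(x+t(x)y)\,dx + O(\epsilon).
\end{align*}

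The central step is to change variables $u=x+t(x)y$ in the inner integral (for each fixed $y$), write the result as $\int f(u)K(u)\,du$ for a kernel $K$ built from $g$, and then apply H\"older to obtain
\begin{align*}
\int g(x)\mathcal{M}_n f(x)\,dx \leq \|f\|_p\,\|K\|_q + O(\epsilon).
\end{align*}
One then recognizes $K(u)$ as being of the form $\int g(u-\tau(u,y)y)\,d\mu_n(y)$ for a measurable $\tau\in(1,2)$ arising from the inverse of the change-of-variables map, which is dominated by $\sup_{t\in(1,2)}|\int g(x-ty)\,d\mu_n(x)|$ after a selection / discretization argument. Taking the sup over $\|f\|_p=\|g\|_q=1$ gives the $\leq$ direction of the claimed equality. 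The $\geq$ direction is obtained by running the same dualization/linearization starting from $\|T_n g\|_q$ with a measurable selector $t(y)$, and performing the change of variables in the opposite direction.

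The main technical obstacle is the change of variables $u = x+t(x)y$: its Jacobian is $1 + y\cdot\nabla t(x)$, and the linearization chooses $t(x)$ only measurably, not smoothly. I would handle this in the standard way by approximating $t$ by Lipschitz functions of small gradient on a large measurable set, or equivalently by discretizing $t$ to take values in a fine dyadic subset of $(1,2)$ and passing to a limit; either approach ensures the Jacobian factor contributes only a harmless multiplicative constant which can be absorbed into $\epsilon$ as the discretization is refined. The secondary subtlety of matching the sup over measurable selectors $\tau(u,y)\in(1,2)$ produced by the linearization to the scalar sup $\sup_{t\in(1,2)}$ inside $T_n$ is then resolved by the fact that the outer sup over $g$ of norm $1$ recovers the scalar version via another selection argument.
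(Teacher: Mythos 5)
Your overall strategy --- $L^p$--$L^{q}$ duality, linearization of the supremum by a measurable selector $t(x)$, Fubini, a change of variables, and H\"older --- is the same as the paper's, which writes $\int g(x)\int f(x+t(x)y)\,d\mu_n(y)\,dx=\int\int f(x)\,g(x-t(x)y)\,dx\,d\mu_n(y)$ and then applies H\"older and takes suprema over $f$ and $g$. So the route is not genuinely different; the question is whether your more careful execution of that route closes.

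There is a genuine gap at your final domination step. After an honest change of variables $u=x+t(x)y$, the selector appearing in $K(u)=\int g(u-\tau(u,y)y)\,d\mu_n(y)$ depends on $y$ as well as on $u$, since $\tau(u,y)=t(x(u,y))$. Such an expression is \emph{not} dominated by $\sup_{t\in(1,2)}\bigl|\int g(u-ty)\,d\mu_n(y)\bigr|$: a $y$-dependent selector can only be controlled by moving the supremum \emph{inside} the $d\mu_n(y)$ integral, which is a strictly larger quantity, and no selection or discretization argument repairs this, because the obstruction is not the measurability of $\tau$ but its dependence on the integration variable. (The paper's own proof conceals the same issue: its formal swap silently replaces $t(x(u,y))$ by $t(u)$, which is precisely what makes the scalar supremum appear; your Jacobian concern, by contrast, is a secondary issue and your proposed fix for it is fine in spirit.) The practical resolution --- and what the paper actually does from (\ref{7.8}) onward --- is to abandon the scalar-sup form of (\ref{7.1}): one fixes an arbitrary measurable selector, works with the dual operator $\M_n^{*}g(y)=\int g(x-t(x)y)\,\mu_n(x)\,dx$ in which the selector sits inside the integral, and proves every subsequent bound uniformly in the choice of $t(\cdot)$. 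If you restate the lemma as the inequality $\|\M_n\|_{L^p\ra L^p}\leq\sup_{t(\cdot)}\sup_{\|g\|_{q}=1}\|\M_n^{*}g\|_{q}$, with the supremum taken over measurable selectors, both your argument and the paper's become correct and nothing used later in the paper is lost.
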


\begin{proof}
 Duality allows us to write the $p$ norm of $\M f $ as a supremum over integrations of $\M f$ against functions $g\in L^{q}$ of norm $1$. If we expand $\M f$ as $\int f(x+t(x)y)\,d\mu(x)$, 
 where $t(x)\in [1,2]$ is chosen to approximate the supremum, this becomes

\begin{align}
 & \int g(x) \int f(x+t(x)y) \,d\mu_n(y) dx\\=&
\int f(x) g(x - t(x)y)\, dx d\mu_n(y) \\=&
\int f(x) \int g(x-t(x)y\,d\mu_n(y) \,dx
\end{align}

If we now apply Holder's inequality, we can bound this by  
\label{7222}
\begin{align}
\|f\|_p (\int |\int g(x - t(x)y)\,d\mu_n(y)|^{q}\,dx)^{\frac{1}{q}}
\end{align}
and taking the supremum over all $\|g\|_{q}=1$, then over all $\|f\|_p=1$ yields (\ref{7.1}).
\end{proof}

From here on out, we fix a function $t : \R\ra [1,2]$, and derive all bounds independent of the specific choice of $t$. In this manner we control the supremum over $t\in(1,2)$ in the definitions above.

By (\ref{7.1}), we look for a bound of the form 

\begin{align}
 (\int |\int g(x-t(x)y) \, d\mu_n(x)|^{q}\,dy)^{\frac{1}{q}} := \|\M_n^* g\|_{q} \leq C 2^{-\eta(p)n} \|g\|_{q} 
\end{align}

By interpolation, it is sufficient to check that (\ref{7222}) holds for $g$ the characteristic function of a set. This can be formalized as the following lemma, which we borrow wholesale from \cite{LabaDiff} (where it is Lemma 3.4)

\begin{lemma} \label{thm:interpolation}
Let $\M_n^{*}$ be as in (\ref{7222}) and $q_0 \geq 2$. Suppose that $\M_n^{*}$ 
obeys the restricted strong-type estimate 
\begin{equation}\label{7.7} \|\M_n^{*} \mathbf 1_{\Omega} \|_{q_0} \leq C 2^{-k \eta_0}|\Omega|^{\frac{q_0-1}{q_0}} \quad \text{ for all sets } \Omega \subseteq [0,1]   \end{equation}  
with some $\eta_0>0$.
Then for any $q < q_0$ there is an $\eta(q)>0$ such that $\M_n^*$ is bounded from $L^q[0,1]$ to $L^{q'}[-4,0]$ with operator norm at most $C 2^{-n\eta(q)}$. 
\end{lemma}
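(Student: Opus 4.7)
The plan is to prove this by real interpolation on the sublinear operator $\M_n^*$, combining the hypothesis---a restricted strong-type estimate at one endpoint, exhibiting geometric decay in $n$---with an elementary endpoint uniform in $n$.

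First, I would restate (\ref{7.7}) as a restricted strong-type bound $\M_n^* : L^{q_0'} \to L^{q_0}$ with operator norm $\le C 2^{-n\eta_0}$, using the identity $|\Omega|^{(q_0-1)/q_0} = \|\mathbf{1}_\Omega\|_{L^{q_0'}}$. Next I would produce a trivial endpoint uniform in $n$: by Fubini and the substitution $y \mapsto -y/t(x)$, invoking $t(x) \in [1,2]$ and $\|\mu_n\|_{\mathrm{TV}} \le \|\mu\|$,
\begin{align*}
\int_{[-4,0]} |\M_n^* g(y)|\, dy \le \int\int |g(x - t(x) y)| \, d|\mu_n|(x) \, dy \le C \|g\|_{L^1},
\end{align*}
yielding $\M_n^* : L^1 \to L^1$ with norm bounded independently of $n$; an analogous computation gives $L^\infty \to L^\infty$ with norm $\le \|\mu\|$.

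Then I would invoke Marcinkiewicz real interpolation for sublinear operators, between the decaying restricted endpoint at $(q_0', q_0)$ and the uniform endpoint at $(1, 1)$ or $(\infty, \infty)$. This produces, for each $\theta \in (0, 1)$, a strong-type bound $\|\M_n^* g\|_{L^{r_\theta}} \le C 2^{-n\theta\eta_0} \|g\|_{L^{p_\theta}}$ with $(1/p_\theta, 1/r_\theta)$ on the line between the endpoint reciprocals, so that the operator norm is interpolated geometrically. Given $q < q_0$, I would select $\theta$ so that $p_\theta = q$ and set $\eta(q) = \theta\, \eta_0 > 0$. Since the target is supported in the bounded set $[-4, 0]$, the embedding $L^{r_\theta} \hookrightarrow L^{q'}$ is then free whenever $r_\theta \ge q'$, converting the off-diagonal bound into the desired $L^q \to L^{q'}$ estimate.

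The delicate step is this last embedding: one must verify that the interpolation configuration can be arranged so that $r_\theta \ge q'$ for every $q < q_0$ with $\theta$ strictly positive. This reduces to a geometric check in $(1/p, 1/r)$ space which may require interpolating against $(1, 1)$ for some sub-range of $q \in (1, q_0)$ and against $(\infty, \infty)$ for another, or---what amounts to the same thing---carrying out the argument directly via the dyadic decomposition $g = \sum_k 2^k \mathbf{1}_{E_k}$, applying the hypothesis to each level set and summing the resulting restricted estimates in $k$ via H\"older, which makes the bookkeeping for every $q < q_0$ transparent.
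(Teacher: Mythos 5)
Your proposal follows essentially the same route as the paper: a trivial endpoint bound uniform in $n$, the restricted strong-type hypothesis at $(q_0',q_0)$ carrying the decay $2^{-n\eta_0}$, interpolation of restricted weak-type bounds (Stein--Weiss/Marcinkiewicz, which is what the paper cites from Bennett--Sharpley), and a H\"older/geometric-mean step to propagate the decay to the intermediate exponents. The one place you are more careful than the paper is the ``delicate step'' you flag at the end, and it is a genuine issue worth recording. Interpolating against $(1,1)$ with weight $1-\theta$ gives target exponent $1/r_\theta = 1-\theta(q_0-1)/q_0$ and source $1/p_\theta = 1-\theta/q_0$, so $r_\theta \ge p_\theta'$ forces $\theta\ge 1$: that endpoint alone never yields the claimed $L^q\to L^{q'}$ bound. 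Interpolating against $(\infty,\infty)$ instead gives $p_\theta = q_0'/\theta$ and $r_\theta = q_0/\theta$, so choosing $\theta = q_0'/q$ yields $r_\theta = q(q_0-1)$, and $r_\theta\ge q'$ exactly when $(q-1)(q_0-1)\ge 1$, i.e.\ $q\ge q_0'$. Hence your construction closes precisely for $q\in(q_0',q_0)$, which is the only range used downstream (the paper applies the lemma with $q$ near $q_0=k$); for $q< q_0'$ the stated conclusion would require a target exponent $q'>q_0$ that no interpolation of the given endpoints can produce, so the quantifier ``for any $q<q_0$'' in the lemma must be read as $q\in(q_0',q_0)$. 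The paper's own proof does not resolve this either: its interpolated pairs satisfy $p'=q_0/\theta$, i.e.\ source exponents $p\le q_0'\le 2$, which never reach the claimed range of $q$. Your version, completed with the $L^\infty$ endpoint as above (with $\eta(q)=\eta_0 q_0'/q>0$), is the more defensible one.
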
 
  
\begin{proof}
Trivially $\|\M_n^{*}\|_{L^1\ra L^1} < \infty$. Interpolation of restricted weak-type endpoint bounds (Section 4, Theorem 5.5, \cite{BS}) guarantees that $\M_n^{*}$ is bounded from $L^p \rightarrow L^q$
for all $(p,q)$ satisfying $p' = q_0/\theta$ and $q' = q_0/(\theta (q_0-1))$, $0 < \theta < 1$ independent of $n$. 

We again interpolate, in the form of Holder's inequality, to obtain decay in $n$

\begin{align*} 
\|\M_n^{*}\mathbf 1_{\Omega}\|_q &\leq \|\M_n^{*}\mathbf 1_{\Omega}\|_{q_0}^{\theta} \|\M_n^{*}\mathbf 1_{\Omega}\|_1^{1 - \theta} \leq C2^{-n\eta_0 \theta} |\Omega|^{\frac{1}{p}}.
\end{align*} 

Together, this gives that the weak-type $(p,q)$ norm of $\M_n^{*}$ is bounded by $C'2^{-n\eta_0 \theta}$ after an application of Theorem 5.3 of \cite[Section 4]{BS}).

To gain the strong bounds we desire, we now apply Marcinkiewicz interpolation to any such $(p_1,q_1)$, $(p_2,q_2)$ to obtain our conclusion.

\end{proof}

Now choose $q_0=k$. Then expanding (\ref{7.7}), we seek bounds for the expression

\begin{align}
\|\mathcal{M}_n^{*}1_{\Omega}\|_{k}^k=& \int \prod_{i=1}^{k}\int 1_{\Omega}(x_i-t(x_i)y) \mu_n(x_i)\,dx_1\,\cdots dx_k\,dy\\\label{7.8}=&
\int_{\Omega\times\cdots\times\Omega}\int \prod_{i=1}^{k}  \mu_n(x_i + t(x_i)y) \,dy\,dx_1\,\cdots dx_k
\end{align}

A portion of this integral may be controlled without assuming anything - this is the ``internal tangency'' portion of the argument, and we begin it with the following lemma.

For $x\in\R^d$, set $|x|_{\min} = \min_{1\leq i \leq d} |x_i|$. 

\begin{lemma}\label{thm:internals}
 Let $\Omega\subset\R^d$ be any set of positive measure, and $A = \big\{ x\in ([0,1]^{d})^k : |x_i|_{\min}\leq \delta \text{ or } |x_i - x_{j}|_{\min} \leq \delta, 1\leq i\neq j\leq k\big\}$ 
 where $\delta>0$ is some parameter. 
Then there is a finite constant $C(k,d)>0$ for which
\begin{align}
|\Omega^k\cap A| \leq  C(k,d) \delta |\Omega|^{k-1}
\end{align}
\end{lemma}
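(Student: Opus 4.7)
The plan is to decompose $A$ into a finite union of simple ``slabs'' indexed by which coordinate of which variable (or pair of variables) is responsible for being small, and then bound each piece by Fubini: the constrained coordinate contributes a factor of $\delta$, the other coordinates of that one vector contribute at most $1$ each, and the remaining $k-1$ vectors are bounded by $|\Omega|$.

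First I would unpack the definition: $|x_i|_{\min} \leq \delta$ is equivalent to the existence of some $\ell \in \{1,\dots,d\}$ with $|x_{i,\ell}| \leq \delta$, and similarly $|x_i - x_j|_{\min} \leq \delta$ is equivalent to the existence of some $\ell$ with $|x_{i,\ell} - x_{j,\ell}| \leq \delta$. Hence
\begin{align*}
A \;\subseteq\; \bigcup_{i,\ell} A^{(1)}_{i,\ell} \;\cup\; \bigcup_{i\neq j,\, \ell} A^{(2)}_{i,j,\ell},
\end{align*}
where $A^{(1)}_{i,\ell} := \{x \in ([0,1]^d)^k : |x_{i,\ell}| \leq \delta\}$ and $A^{(2)}_{i,j,\ell} := \{x : |x_{i,\ell} - x_{j,\ell}| \leq \delta\}$. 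There are only $kd + k(k-1)d$ such sets.

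Next, I would bound each piece by Fubini. For $A^{(1)}_{i,\ell}$, the indicator factors as a product over the $k$ variables, so integrating gives
\begin{align*}
\bigl|A^{(1)}_{i,\ell} \cap \Omega^k\bigr| \;=\; \bigl|\{x_i \in \Omega : |x_{i,\ell}| \leq \delta\}\bigr| \cdot |\Omega|^{k-1} \;\leq\; \delta \cdot |\Omega|^{k-1},
\end{align*}
since requiring $x_{i,\ell} \in [0,\delta]$ with the other $d-1$ coordinates of $x_i$ free in $[0,1]$ produces a set of measure at most $\delta$. For $A^{(2)}_{i,j,\ell}$, I would integrate first over the $k-2$ variables other than $x_i$ and $x_j$ (contributing $|\Omega|^{k-2}$), then over $x_i \in \Omega$ (contributing $|\Omega|$), and finally over $x_j$ with $|x_{j,\ell} - x_{i,\ell}| \leq \delta$ while $x_i$ is fixed. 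The innermost integral is bounded by the measure of the slab $\{x_j \in [0,1]^d : |x_{j,\ell} - x_{i,\ell}| \leq \delta\} \leq 2\delta$, yielding $\bigl|A^{(2)}_{i,j,\ell} \cap \Omega^k\bigr| \leq 2\delta \, |\Omega|^{k-1}$.

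Summing over all indices produces a total bound of $(kd + 2k(k-1)d)\,\delta\, |\Omega|^{k-1}$, which is of the claimed form with $C(k,d) = kd + 2k(k-1)d$. There is no real obstacle here; the only thing to watch is that the slab bound is taken in the unrestricted cube $[0,1]^d$ (rather than using $\Omega$) for the variable whose coordinate is constrained, while $\Omega$ is used freely to bound the other $k-1$ variables. This is exactly the trade-off that gives a gain of $\delta$ (rather than just $|\Omega|$) at the cost of one factor of $|\Omega|$.
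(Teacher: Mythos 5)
Your proof is correct and follows essentially the same route as the paper: both decompose $A$ into a union of $\delta$-thickenings of the coordinate hyperplanes $\{x_{i,\ell}=0\}$ and $\{x_{i,\ell}=x_{j,\ell}\}$ and bound each piece's intersection with $\Omega^k$ by Fubini, using the full cube for the constrained coordinate and $\Omega$ for the remaining $k-1$ vectors. Your write-up is in fact slightly more careful than the paper's (explicit count of the pieces and the factor $2\delta$ for the difference slabs), but the idea is identical.
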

\begin{proof}
 Let $x_i\in\R^d$ have components $x_i^j$ so that $x_i=(x_i^1,\dots,x_i^d)$. Write $A$ as the union of $\delta$ thickenings of the $kd$ $1$-dimensional subspaces 
 $S_{0,i;j}:=\{x\in [0,1]^{kd}: x_i^j=0\}$, $S_{i,i';j}:=\{x\in[0,1]^{kd} : x_{i}^j=x_{i'}^j\}$, $i=1,\dots,k-1$, $j=1,\dots,d$. Certainly, letting $(E)_{\delta}$ denote a $\delta$-thickening of the set $E$, we have 
\begin{align*}
 |\Omega^k\cap (S_{0,i;j})_{\delta}| \leq |\left\{  (x_1,\dots,x_k)\in[0,1]^{kd} : |x_i^j|<\delta, x_{i'}\in\Omega,i'\neq i, i=1,\dots,k  \right\}| = C_d \delta |\Omega|^{k-1}
\end{align*}

Similarly, $|\Omega^k\cap (S_{i,i';j})_{\delta}| \leq |\left\{  (x_1,\dots,x_k) : x_n\in \Omega, n\neq i', x_i^j\in B(x_{i'}^j,\delta) \right\}| = C_d \delta|\Omega|^{k-1}$.
\end{proof}

Now (\ref{7.8}) may be controlled through an application of this lemma together with our higher-order methods.

\begin{thm}\label{thm:d11}
 There is a constant $C$ depending only on the $k-1$-st order Fourier dimension $\beta$ of $\mu$, the constants $C_H$ and $\alpha$ in (\ref{ballcondition}), and $d$ and $k$
 so that (\ref{7.8}) is bounded by $C 2^{-n\eta} |\Omega|^{k-1}$.
\end{thm}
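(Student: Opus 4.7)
The plan is to split the integral (\ref{7.8}) into two regimes --- ``tangential'' and ``transverse'' --- separated by a threshold parameter $\delta = \delta(n)$ to be optimized at the end. The tangential part is controlled using the pointwise size of $\mu_n$ furnished by the ball condition (\ref{ballcondition}) together with Lemma \ref{thm:internals}; the transverse part is controlled by recasting the inner $y$-integral through Lemma \ref{thm:transverse} and invoking the $U^k$ decay from Proposition \ref{thm:rk}.

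Fix $\delta>0$ and partition $\Omega^k=(\Omega^k\cap A)\cup(\Omega^k\setminus A)$ with $A$ the tangency set from Lemma \ref{thm:internals}. On $\Omega^k\cap A$, the ball condition gives the pointwise bound $\|\mu_n\|_\infty\leq C 2^{n(d-\alpha)}$; applying this on $k-1$ of the factors and integrating the remaining factor in $y$ (which contributes only the constant $t(x_i)^{-d}\|\mu\|_1$ after a change of variables), together with the estimate $|\Omega^k\cap A|\leq C\delta|\Omega|^{k-1}$ of Lemma \ref{thm:internals}, yields a tangential contribution of order $2^{(k-1)n(d-\alpha)}\,\delta\,|\Omega|^{k-1}$.

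On the transverse set, the separations $|x_i|_{\min}>\delta$ and $|x_i-x_j|_{\min}>\delta$ permit a rescaling of the centers by $\delta$ so that the resulting parameters $b_i$ satisfy $|b_i-b_j|\geq 1$, matching the hypothesis of Lemma \ref{thm:transverse}. After this rescaling (and an absorption of the scales $t(x_i)\in[1,2]$, for instance via pigeonholing onto level sets of $t$), the inner $y$-integral fits the template $\int\int\prod f_i(x-b_ir)\,dx\,dr$ of Lemma \ref{thm:transverse}. Applied with index range $0,\dots,k-1$ (so $k$ factors and the $U^k$ norm) and $f_i=\mu_n$, the lemma yields a bound of the form $\|\mu_n\|_\infty^{k-1}\|\mu_n\|_{U^k}$. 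Using $\|\mu_n\|_\infty\leq C 2^{n(d-\alpha)}$ and $\|\mu_n\|_{U^k}\leq C 2^{-\eta' n}$ directly from Proposition \ref{thm:rk} (interpreting $\mu_n$ as a dyadic piece $\psi_n\ast\mu$, which is the relevant object after the dyadic decomposition preceding this theorem), and accounting for the rescaling Jacobian (a $\delta^{-s}$ factor with $s=s(d,k)$), one obtains a transverse bound of order $\delta^{-s}\,2^{(k-1)n(d-\alpha)}\,2^{-\eta' n}|\Omega|^{k-1}$.

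Choosing $\delta=2^{-cn}$ with $c$ tuned to balance the two contributions yields the desired $C 2^{-\eta n}|\Omega|^{k-1}$ bound with $\eta>0$, provided $\beta$ is close enough to $d$ (so that $\eta'$ is sizeable via Proposition \ref{thm:rk}) and $\alpha$ is close enough to $d$ (so the loss $2^{(k-1)n(d-\alpha)}$ is mild). The main technical obstacle I anticipate is fitting the $y$-integral into the template required by Lemma \ref{thm:transverse}: the $k$ factors $\mu_n(x_i+t(x_i)y)$ have distinct centers $x_i$ \emph{and} distinct scales $t(x_i)$, while the lemma's template features a single shift variable $x$ and a single scaling variable $r$. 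Normalizing the centers is handled by the $\delta$-rescaling legitimated by the transverse separation; normalizing the scales is more delicate since $t$ is an arbitrary measurable function fixed at the outset, and a pigeonhole argument on level sets of $t$ must be coordinated with the rescaling so as not to produce polynomial losses in $\delta^{-1}$ that would overwhelm the $U^k$ decay. Once this change-of-variables step is in place, the rest of the proof is bookkeeping of exponents.
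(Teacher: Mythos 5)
Your overall architecture is the paper's: the same tangential/transverse split via Lemma \ref{thm:internals}, the same $L^\infty$ bound $\|\mu_n\|_\infty\lesssim 2^{n(d-\alpha)}$ on the tangential piece, and the same intention to finish the transverse piece with Lemma \ref{thm:transverse} plus Proposition \ref{thm:rk} applied to the dyadic pieces. But the one step you defer --- recasting $\int\prod_{i=1}^k\mu_n(x_i+t(x_i)y)\,dy$ as a multilinear form $\int\int\prod_i f_i(x-b_ir)\,dx\,dr$ --- is the crux of the whole theorem, and the mechanism you propose does not produce the required structure. Rescaling the centers $x_i$ by $\delta$ normalizes the separations but does not create a second integration variable: in $\mu_n(x_i+t(x_i)y)$ the only integration variable of the inner integral is $y$, which can serve as the lemma's ``$x$'', while the $x_i$ are frozen parameters, so nothing plays the role of the dilation variable $r$ that the Cauchy--Schwarz/shifting argument inside Lemma \ref{thm:transverse} requires. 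The paper's resolution is a different substitution: on the ordered set $\Omega'$ one writes $x_i=b_ix_1$ (componentwise product), so that the transverse separations translate into $|b_i-b_{i'}|_{\min}\geq 1$, and then $x_1$ itself becomes the lemma's $r$ (with $x=y$ and directions $t_ib_i$, the scales $t_i=t(x_i)^{-1}$ being absorbed into the $b_i$ --- no pigeonholing on level sets of $t$ is needed).

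There is a second, related gap: you assert the transverse contribution carries a factor $|\Omega|^{k-1}$, but Lemma \ref{thm:transverse} outputs $\|f\|_\infty^{k-1}\|f\|_{U^k}$ with no $|\Omega|$ in it, and once you invoke it you have discarded the indicator $1_{\Omega^k\setminus A}$. The paper recovers $|\Omega|^{k-1}$ by a H\"older split in the new variables $(x_1,b_2,\dots,b_k)$: a supremum over $b$ of the multilinear form, which is what Lemma \ref{thm:transverse} estimates, times the integral over $b$ of $1_{\Omega'\setminus\Omega_0}(x_1,b_2x_1,\dots,b_kx_1)$, which upon undoing the substitution is at most $|\Omega|^{k-1}$. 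This same split is what eliminates the $\delta^{-s}$ Jacobian loss you were planning to balance away. Your exponent bookkeeping at the end is fine, but without these two devices the transverse estimate does not close.
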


\begin{proof}

To begin, we may assume $\beta_0<d$ large enough that $r_k(\beta)>0$ for all $\beta>\beta_0$, which is possible by Proposition \ref{thm:rk}, and next suppose $\alpha<d$ large enough
that $\frac{r_k(\beta_0)}{2^k}>k(d-\alpha)$, which implies $\frac{r_k(\beta)}{2^k}-k(d-\alpha)>0$ since by Proposition \ref{thm:rk}, $r_k$ is increasing.

Let $\delta=2^{-nk\eps}$ where $\eps>d-\alpha>0$, and 
\[A = \big\{ x\in [0,1]^k : |x_i|_{\min}\leq 2^{-nk\eps} \text{ or } |x_i - x_{j}|_{\min} \leq 2^{-nk\eps} , 1\leq i,j\leq k\big\}\]

Set $\Omega_0 = \left\{x\in\Omega ' : |x_i|_{\min}\leq 2^{-nk\eps} \text{ or } |x_i - x_{j}|_{\min} \leq 2^{-nk\eps}\right\} = \Omega^k\cap A$, and note that the integral (\ref{7.8}) is

\begin{align}\label{Ommeggga}
 &\int_{\Omega_0} \int \prod_{i=1}^{k} \mu_n(x_i-t(x_i)y) \,dx\,dy\\\label{omeggga}&+
\int_{\Omega^k\setminus\Omega_0} \int \prod_{i=1}^{k} \mu_n(x_i-t(x_i)y) \,dx\,dy
\end{align}

We first estimate (\ref{Ommeggga}).
 
Since $\|\mu_n\|_{\infty} = \|\phi_n\ast\mu\|_{\infty} \leq C 2^{n(d-\alpha)}$ where $\alpha$ is the Hausdorff dimension of $\mu$ (this is a standard consequence of the uncertainty principle, see e.g. \cite{Erdogan}, Section 4), we have (after a change of variables on $y$)

\begin{align}\label{webegin}
 \int_{\Omega_0} \int_{[0,1]^{d}} \prod_{i=1}^{k} \mu_n(x_i - t(x_i) y) \,dy\,dx \leq C 2^{nk(d-\alpha)} \int 1_{\Omega_0}1_{A} \leq C 2^{nk(d-\alpha)} |\Omega^k\cap A|
\end{align}

By Lemma \ref{thm:internals}, $|\Omega^k\cap A| \leq C(d,k) 2^{-nk\eps}|\Omega|^{k-1}$, so that
\begin{align}\label{wefinshing}
 (\ref{webegin})\leq C 2^{-nk(\eps-(d-\alpha))}|\Omega|^{k-1} : = C 2^{-n\eta_0}|\Omega|^{k-1}
\end{align}
 which decays as we chose $\eps>d-\alpha$.

This leaves us with a need to estimate the integral (\ref{omeggga})

\begin{align*}
 \int_{\Omega^k\setminus\Omega_0} \int \prod_{i=1}^{k} \mu_n(x_i-t(x_i)y) \,dx\,dy
\end{align*}

 Since the expression is symmetric in each copy of $\Omega$, we may write 
 \begin{align}
  \Omega ' := \left\{ x\in\Omega^k : |x_1|_{\min}\leq| x_2|_{\min}\leq\cdots\leq |x_k|_{\min}\right\}
 \end{align}
 and consider an integral over this region instead, since (\ref{7.8}) will be its constant multiple.

Define $f$ so that $f(y-t(x)^{-1}x) = \mu_{n}(t(x)x - y)$. 

Since $|x_1|_{\min} > 2^{-nk\eps}$, $|x_{i+1}-x_i|_{\min}>2^{-nk\eps}$, and $|x_i|_{\infty}\in[0,1]$, we can define $b_i=b_i(x_i)\in\R^d$ with $|b_i(x_i)|_{\min}\in[1,C2^{nk\eps)}),|b_i-b_{i'}|_{\min}\in[1,C2^{nk\eps)})$ 
for which $x_i = b_i(x_i)x_1$ (recall for vectors $v$ and $u$, $vu := (v_1u_1,\dots,v_du_d)$). 

Then setting $B=\prod_{i=2}^{k-1}[i,2^{nk\eps}]\setminus\left\{|b_{i+1}-b_i|_{\min}< 1\right\}$, after a change of variables we can express (\ref{omeggga}) as

\begin{align}
 \int_{B} \int_{[0,1]^d} \int 1_{\Omega '\setminus\Omega_0} (x_1,b_2 x_1,\dots,b_k x_1) \prod_{i=1}^{k} f(y - t_i b_i x_1) |x_1|^{k-1}\,dy\,dx_1\,db
\end{align}
where $t_i=t(x_i)^{-1}$. This we may bound through an application of Holder's inequality as

\begin{align}
 &[\sup_{b\in B} \int_{[0,1]^d} \int 1_{\Omega '\setminus\Omega_0} (x_1,b_2x_1,\dots,b_kx_1) \prod_{i=1}^{k} f(y - t_i b_i x_1) \,dy\,dx_1]\label{771}\\&\cdot
[\sup_{x_1\in[0,1]^d} \int_{B} |x_1|^{k-1}  1_{\Omega '\setminus\Omega_0} (x_1,b_2x_1,\dots,b_kx_1) \,db]\label{772}
\end{align}

After changing variables back, we bound (\ref{772}) by $|\Omega'\setminus\Omega_0|\leq|\Omega|^{k-1}$.

Bounds for (\ref{771}) on the other hand come from Lemma \ref{thm:transverse} applied to 
\[ [\sup_{b\in B} \int_{[0,1]^d} \int[0,1]^d \prod_{i=0}^{k} F_i(x - (t_i b_i)r) \,dx\,dr]\]
with $x=y, r=x_1$, $F_0\equiv 1$, and $F_{i\geq 1} = f$, which majorizes (\ref{772}).


The higher-order Fourier dimension assumption together with Proposition \ref{thm:rk} tells us that
\begin{align}\label{216}
 \|f\|_{U^k} = \|\mu_{n+1}-\mu_n\|_{U^k} \lesssim 2^{-\frac{r_k}{2^k} n}
\end{align}
and the ball condition (\ref{ballcondition}) tells us that 
\begin{align}\label{215}
 \|f\|_{\infty} \lesssim 2^{(d-\alpha)n}
\end{align}
Putting these bounds together via Lemma \ref{thm:transverse},
we've shown that

\begin{align}
&(\ref{771})\leq (\ref{215})^{k-1}\cdot(\ref{216}) = 2^{-(\frac{r_k}{2^k}-k(d-\alpha))n}
\end{align}

Combining the estimates on (\ref{771}) and (\ref{772}),

\begin{align}
&(\ref{omeggga})\lesssim 2^{-(\frac{r_k}{2^k}-k(d-\alpha))n} |\Omega|^{k-1}\\
:=& 2^{-n\eta_1} |\Omega|^{k-1}
\end{align}

We conclude from our initial assumptions on the size of $\beta_0$ and $\alpha$ that   $\eta_1>0$ for all $\beta>\beta_0$.

Then by this bound on (\ref{omeggga}) and the bound (\ref{wefinshing}) on (\ref{Ommeggga}), for all $\beta>\beta_0$
there is a positive $\eta=\min(\eta_0,\eta_1)=\eta(\beta)$ so that (\ref{7.8}) is bounded by a constant multiple of $ 2^{-n\eta}|\Omega|^{k-1}$.
\end{proof}

So, if $\mu$ has a $k-1$-st order Fourier dimension close enough to $d$, Theorem \ref{thm:d11} together with Lemma \ref{thm:interpolation} tell us that $\mathcal{M}_n^*$ is bounded from  $L^k([0,1])$ to $L^{\frac{k}{k-1}}([0,1])$ with a bound decaying exponentially in $n$. 
By Lemma \ref{thm:D1} and the discussion following it, this means that $\mathcal{M}_n$ is bounded from, say, $L^{\frac{k}{k-1}}([0,1]^d)$ to $L^{\frac{k}{k-1}}([-4,4]^d)$ with a bound that also decays exponentially in $n$. Summing these up, we obtain a bound on $\mathcal{M}$ from $L^{\frac{k}{k-1}}([0,1]^d)$ to $L^{\frac{k}{k-1}}([-4,4]^d)$. Putting this together with Lemma \ref{thm:d6} below, we conclude

\begin{thm}\label{thm:sooomany}
 There is a constant $C>0$ depending on the $k-1$-st order Fourier decay rate of $\mu$, so that for any $p> {\frac{k}{k-1}}$, $\sup_{t\in[1,2]} \int f(x-ty)\,d\mu(y)$ has an $L^p$ norm less than $C(p)\|f\|_{L^p}$.
\end{thm}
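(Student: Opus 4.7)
The plan is to assemble the dyadic estimates already in place. Theorem \ref{thm:d11} gives the restricted strong-type bound
\begin{align*}
\|\mathcal{M}_n^* \mathbf{1}_{\Omega}\|_k \leq C\, 2^{-n\eta_0} |\Omega|^{(k-1)/k}
\end{align*}
for every $\Omega\subseteq[0,1]^d$, with some $\eta_0>0$ guaranteed by the higher-order Fourier dimension hypothesis. Lemma \ref{thm:interpolation}, applied with $q_0=k$, upgrades this to the strong-type estimate $\|\mathcal{M}_n^*\|_{L^q([0,1])\to L^{q'}([-4,0])} \leq C(q)\, 2^{-n\eta(q)}$ for every $q<k$, again with positive rate $\eta(q)>0$.

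Next, I invoke the duality identity of Lemma \ref{thm:D1}: setting $p=q'>\tfrac{k}{k-1}$, the identity $\|\mathcal{M}_n\|_{L^p\to L^p} = \|\mathcal{M}_n^*\|_{L^{p'}\to L^p}$ delivers $\|\mathcal{M}_n\|_{L^p([0,1]^d)\to L^p([-4,4]^d)} \leq C(p)\, 2^{-n\eta(p)}$. The zeroth piece $\mathcal{M}_0$ must be handled separately: its kernel $\psi_1 * \mu$ is a fixed bounded Schwartz function, so $\mathcal{M}_0 f$ is pointwise dominated by a constant multiple of the Hardy--Littlewood maximal function, hence bounded on every $L^p$ with $p>1$. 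Summing the geometric series
\begin{align*}
\|\mathcal{M}\|_{L^p([0,1]^d)\to L^p([-4,4]^d)} \leq \|\mathcal{M}_0\|_{L^p\to L^p} + \sum_{n\geq 1} C(p)\, 2^{-n\eta(p)} < \infty
\end{align*}
gives the local version of the desired bound.

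Finally, I pass from this local estimate to the global $L^p(\R^d)\to L^p(\R^d)$ estimate asserted in the theorem by invoking Lemma \ref{thm:d6}, as the text indicates. Since $\mathcal{M}$ is translation invariant and, for $t\in[1,2]$, its kernel is supported in a bounded neighborhood of the origin (whose size is controlled by $\supp\mu$), a standard tiling of $\R^d$ by unit cubes together with $\ell^p$ summation of the local bounds yields the global claim.

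The main conceptual work --- the delicate bound on the $k$-fold integral (\ref{7.8}) in terms of the Gowers $U^k$ norm --- has already been carried out in Theorem \ref{thm:d11}, which was the only place where the higher-order Fourier dimension hypothesis entered. The only residual subtlety is to verify that the interpolation parameter $\theta$ in Lemma \ref{thm:interpolation} can be chosen so that the resulting rate $\eta(p)$ is strictly positive for every $p>\tfrac{k}{k-1}$; taking $\theta$ slightly less than $1$ suffices, and as $p \downarrow \tfrac{k}{k-1}$ the rate $\eta(p)$ degenerates, which is consistent with the failure of the endpoint claim.
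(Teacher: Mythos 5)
Your proposal is correct and follows essentially the same route as the paper: Theorem \ref{thm:d11} feeds the restricted strong-type bound into Lemma \ref{thm:interpolation} with $q_0=k$, duality via Lemma \ref{thm:D1} transfers the decaying bounds to $\mathcal{M}_n$, the series is summed with $\mathcal{M}_0$ handled by Hardy--Littlewood, and Lemma \ref{thm:d6} removes the compact-support restriction. No substantive differences from the paper's argument.
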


So we proceed to show that the restriction to support in $[0,1]$ may be dropped. This is the result of the final lemma, a standard application of disjointness of support.

\begin{lemma}\label{thm:d6}Suppose that for $f$ supported in $[0,1]^d$, $\|\mathcal{M} f\|_p\leq C \|f\|_p$. Then in fact, ${\mathcal{M}}$ is bounded from $L^p(\R^d)$ to $L^p(\R^d)$.

\end{lemma}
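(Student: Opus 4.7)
The plan is a standard localization argument exploiting two features of the setup: translation invariance of $\mathcal{M}$, and the compact support of $\mu$. Tile $\R^d$ by unit cubes $Q_k := k + [0,1)^d$, $k \in \Z^d$, and decompose $f = \sum_k f_k$ with $f_k := f \cdot \mathbf{1}_{Q_k}$. The operator $\mathcal{M}$ commutes with translations, i.e.\ $\mathcal{M}(f(\cdot - a))(x) = (\mathcal{M}f)(x-a)$, as is immediate from the definition. Applying the hypothesis to the translated function $f_k(\cdot + k)$, which is supported in $[0,1]^d$ and has the same $L^p$ norm as $f_k$, therefore gives $\|\mathcal{M}f_k\|_p \leq C\|f_k\|_p$ for every $k \in \Z^d$.

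Next I would use the compact support of $\mu$, say $\supp(\mu) \subset B(0,R)$, to show that the $\mathcal{M}f_k$ have bounded overlap in their supports. Indeed, if $\mathcal{M}f_k(x) \neq 0$ then there exist $t \in [1,2]$ and $y \in \supp(\mu)$ with $x + ty \in Q_k$, so $x \in Q_k + B(0, 2R) =: \widetilde{Q}_k$. The sets $\widetilde{Q}_k$ are translates of a fixed bounded region, and a simple volume count shows that every point of $\R^d$ lies in at most $N = N(d,R)$ of them.

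Finally, the pointwise inequality $\mathcal{M}f(x) \leq \sum_k \mathcal{M}f_k(x)$ (immediate from subadditivity of the supremum over nonnegative quantities; if $f$ is signed, first replace $f$ by $|f|$) combined with the fact that at each $x$ at most $N$ terms in the sum are nonzero gives, by Jensen's inequality,
$$\mathcal{M}f(x)^p \leq N^{p-1} \sum_k \mathcal{M}f_k(x)^p.$$
Integrating and applying the per-piece bound from the first paragraph,
$$\|\mathcal{M}f\|_p^p \leq N^{p-1} \sum_k \|\mathcal{M}f_k\|_p^p \leq N^{p-1} C^p \sum_k \|f_k\|_p^p = N^{p-1} C^p \|f\|_p^p,$$
where in the last step the disjointness of the $Q_k$ is used. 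There is no substantive obstacle; the only ingredients are translation invariance, compact support of $\mu$, and a finite overlap count, all of which are elementary to verify.
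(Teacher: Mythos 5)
Your argument is correct and is essentially the paper's own proof: decompose $f$ over a unit-cube tiling, apply the hypothesis to each piece (the paper tacitly uses the translation invariance you make explicit), and exploit the bounded overlap of the supports of $\mathcal{M}f_k$ coming from the compact support of $\mu$ to sum the $p$-th powers. Your write-up is somewhat more careful than the paper's (explicit Jensen step with the $N^{p-1}$ factor), but the route is the same.
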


\begin{proof}
Let $(J_i)$ denote an enumeration of the dyadic cubes of sidelength $1$ in $\R^d$, and write $f =\sum f$, $\supp(f) \subset J_i$.
Then using the subadditivity of ${\mathcal{M}}$, and the fact that $\supp{\mathcal{M}}f_i$ is at most increased by $5^d$ in size
(since $\supp(f_i\ast\mu)\subset\supp(f_i)+\supp(\mu)$), we have that at most $5^d$ of the $\supp{\mathcal{M}}f_i$ overlap, hence
\begin{align}
 \|\mathcal{M}f\|_p^p \leq \|\sum_i \mathcal{M}f_i\|_p^p\leq\\&
\leq 5^d \sum \|{\mathcal{M}}f_i\|_p^p
\end{align}

Applying now the hypothesized bounds and condensing the sum into $\|f\|_p^p$, we obtain the claim.
\end{proof}

\section{Scaling and the unrestricted operator}
\label{ch:scaling}

In this section we derive the lemmas necessary to reduce boundedness of the maximal operator $\tilde{\mathcal{M}}$ defined via a supremum over the full range of scales $t>0$, to the boundedness of the operator $\mathcal{M} : f\mapsto \sup_{t\in[1,2]} f(x+ty)\,d\mu(y)$. The main result of the section is Lemma \ref{thm:scale5}, which concludes the boundnedness of $\tilde{\mathcal{M}}$ on functions of compact support. Lemma \ref{thm:scale6} then concludes $L^p(\R^d)$ bounds from this, completing the proof.

Let $\mathbb{E}_s$ denote conditional expectation with respect to the dyadic $\sigma$-algebra generated by dyadic cubes of length $2^{-s}$.
 Let $f^*$ denote the Hardy-Littlewood maximal function for $f$.

\begin{lemma}\label{thm:scale1}
For any $s\in\Z$, we have $|\int \mathbb{E}_s f(x+ry) \,d\mu(y) |\leq 5^d f^{*}(x)$.
\end{lemma}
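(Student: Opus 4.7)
The plan is to unpack the dyadic conditional expectation, use Fubini's theorem to rewrite the left-hand side as a Lebesgue integral of $f$ against a nonnegative kernel, and then dominate that kernel by a multiple of a normalized indicator of a ball centered at $x$.

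First, since $|\mathbb{E}_s f|\leq \mathbb{E}_s|f|$ by Jensen's inequality for conditional expectation, I may assume $f\geq 0$. Let $Q_s(w)$ denote the dyadic cube of sidelength $2^{-s}$ containing $w$, so that $\mathbb{E}_s f(x+ry)=|Q_s(x+ry)|^{-1}\int_{Q_s(x+ry)} f(z)\,dz$. Exchanging the order of integration via the symmetry $\mathbf{1}_{Q_s(x+ry)}(z)=\mathbf{1}_{Q_s(z)}(x+ry)$ gives
\[
\int \mathbb{E}_s f(x+ry)\,d\mu(y) \;=\; \int f(z)\,K(z)\,dz, \qquad K(z):=\frac{\mu(\{y:x+ry\in Q_s(z)\})}{|Q_s(z)|}.
\]

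Next, I analyze $K$. Its support is contained in $\bigcup_{y\in\supp\mu}Q_s(x+ry)\subset B(x,R)$, with radius $R$ controlled by $r\cdot\text{diam}(\supp\mu)$ and $2^{-s}$, and $K(z)\leq \|\mu\|/|Q_s(z)|$ pointwise. A covering argument reminiscent of the classical $5r$-lemma then dominates $K$ by $5^{d}\cdot|B(x,R)|^{-1}\mathbf{1}_{B(x,R)}$, at which point
\[
\int f(z)\,K(z)\,dz \;\leq\; 5^{d}\cdot\frac{1}{|B(x,R)|}\int_{B(x,R)} f(z)\,dz \;\leq\; 5^{d}f^{*}(x)
\]
closes the estimate.

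The main technical point is achieving the uniform constant $5^{d}$ across all scales $r$ and $2^{-s}$. In the regime $r\cdot 2^{s}\lesssim 1$ the kernel is supported on only a bounded collection of dyadic cubes abutting $x$, and the factor $5^{d}$ arises directly from the standard ball-to-cube volume ratio. In the opposite regime $r\cdot 2^{s}\gg 1$ one must trade the pointwise bound $K\leq\|\mu\|/|Q_s|$ against the $L^{1}$-mass $\int K=\|\mu\|$ and exploit that $K$ is spread (essentially uniformly) across a ball of radius $\sim r$, so that the factor $5^{d}$ reemerges from the normalization of the enclosing ball and does not degenerate into $(R\cdot 2^{s})^{d}$.
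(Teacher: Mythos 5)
Your Fubini reformulation and the kernel estimate in the regime $r2^{s}\lesssim 1$ are correct, and in that regime your argument is just the dual form of the paper's: the paper covers $A(\supp\mu)$, $A\colon y\mapsto x+ry$, by at most $5^{d}$ dyadic cubes $J_i$ of side $2^{-s}$ (possible precisely because $r\lesssim 2^{-s}$), notes that $\mathbb{E}_s f$ is constant on each $J_i$ and bounded there by $5^{d}f^{*}(x)$, and sums against $\mu_A(J_i)$ using total mass $1$. Your kernel is exactly $K=\sum_i \mu_A(J_i)|J_i|^{-1}\mathbf{1}_{J_i}$, so the two computations coincide.

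The genuine gap is your treatment of the ``opposite regime'' $r2^{s}\gg 1$. The claim that $K$ is spread essentially uniformly over a ball of radius $\sim r$ is false for the singular measures this paper concerns, and in fact the inequality itself fails there, so no covering argument can recover the constant $5^{d}$. Concretely, $K(z)=\mu(A^{-1}Q_s(z))/|Q_s(z)|$ and $A^{-1}Q_s(z)$ is a cube of side $2^{-s}/r$; if $\mu$ satisfies only $\mu(B(y,\rho))\le C\rho^{\alpha}$ with $\alpha<d$ (and attains this somewhere), then $K$ can be as large as $(2^{-s}/r)^{\alpha}2^{sd}$, which exceeds $|B(x,r)|^{-1}\sim r^{-d}$ by the factor $(r2^{s})^{d-\alpha}\to\infty$. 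For an outright counterexample to the stated bound, take $r=1$, a point $y_0\in\supp\mu$ with $y_0\neq 0$ and $\mu(B(y_0,\rho))\approx\rho^{\alpha}$, $f=\mathbf{1}_{B(x+y_0,\rho)}$, and $2^{-s}\ll\rho$: the left-hand side is $\gtrsim\rho^{\alpha}$ while $f^{*}(x)\lesssim\rho^{d}$. The lemma is only true --- and is only invoked, in Lemma \ref{thm:scale5} --- under the implicit standing assumption $r\lesssim 2^{-s}$; you should make that restriction explicit and delete the second regime rather than attempt to prove it.
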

\begin{proof}

We may as well suppose $f$ a positive function.

 Let $A$ denote the transformation $y\mapsto x+ry$, and set $\mu_A = A_{*}\mu$.

We divide $A (\supp(\mu)) \subset A[0,1]^d$ into at most $5^d$ subintervals $J_i$, $i=1,\dots,5^d$ of length $2^{-s}$. Then for any $x_i\in J_i$, say, 

\begin{align}
 &\int \mathbb{E}_s f \,d\mu_A =\\&
\sum_i \int_{J_i} \mathbb{E}_s f \,d\mu_A=\\&
\sum_i \mathbb{E}_s f(x_i) \mu_A(J_i)=\\&
\sum_i \mu_A (J_i) \mathbb{E}_s f(x_i)
\end{align}

Now
\begin{align}\mathbb{E}_s f(x_i) = \frac{1}{|J_i|} \int_{J_i} f \leq \frac{5^d}{5^d J_i} \int_{5 B(x_i,2^{-s})} f\leq 5^d f^{*}(x)\end{align}
and $\sum_i \mu_A(J_i) = 1$, whence $\int \mathbb{E}_s f\,d\mu_A \leq 5^d f^{*}(x)$.
\end{proof}

\begin{lemma}\label{thm:scale2}
 Suppose that $k\leq c s^{\half}$, $\mu_k:= \phi_k\ast\mu$ for some $\phi_k$ with $\|\phi_k'\|_{\infty}\leq 2^{2k}$, say $\phi_k (x) = 2^k\phi(2^k x)$ where $\|\phi'\|_{\infty}\leq 1$. Then for $|y_1-y_2|\leq 2^{-s}$ and $t(x)\in(1,2)$
\begin{align}
\|\mu_k(\frac{y_1-x}{t(x)}-\cdot) - \mu_k(\frac{y_2-x}{t(x)}-\cdot) \|_{\infty} \leq 2^{-(s-2cs^{\half})}
\end{align}

\end{lemma}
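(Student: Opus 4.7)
The plan is to reduce the statement to a Lipschitz estimate on the smoothed density $\mu_k$, using the identity $\nabla \mu_k = (\nabla \phi_k)\ast \mu$ to transfer the derivative from $\mu_k$ onto the kernel (which is explicitly controlled by hypothesis). First I would unpack the $L^\infty$ norm by setting $a_i := (y_i - x)/t(x)$ for $i=1,2$. Since $t(x)\geq 1$ and $|y_1-y_2|\leq 2^{-s}$, we have $|a_1-a_2|\leq 2^{-s}$, and the quantity to bound becomes $\sup_{z\in\R^d} \bigl|\mu_k(a_1 - z) - \mu_k(a_2 - z)\bigr|$.

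Since $\mu_k = \phi_k\ast\mu$ is $C^1$, the fundamental theorem of calculus along the segment joining $a_1-z$ to $a_2-z$ gives $\bigl|\mu_k(a_1-z)-\mu_k(a_2-z)\bigr|\leq \|\nabla \mu_k\|_\infty\cdot|a_1-a_2|$. The convolution identity $\nabla\mu_k=(\nabla\phi_k)\ast\mu$, together with the standard $L^\infty$-bound on convolution against a finite measure, yields $\|\nabla\mu_k\|_\infty\leq \|\nabla\phi_k\|_\infty\cdot\mu(\R^d)\lesssim 2^{2k}$, where I have used the hypothesized bound $\|\phi_k'\|_\infty\leq 2^{2k}$ and the finiteness of $\mu$. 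Combining these with $|a_1-a_2|\leq 2^{-s}$ and invoking $k\leq cs^{1/2}$ in the form $2k\leq 2cs^{1/2}$, I obtain $\bigl|\mu_k(a_1-z)-\mu_k(a_2-z)\bigr|\lesssim 2^{2k-s}\leq 2^{-(s-2cs^{1/2})}$, with $\mu(\R^d)$ absorbed into the implicit constant. Taking the supremum over $z$ gives the desired $L^\infty$ bound.

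There is no genuine obstacle here; the lemma is a routine Lipschitz bound. The only point worth emphasizing is the bookkeeping: the hypothesis $k\leq c s^{1/2}$ is calibrated precisely so that the $2^{-s}$ gain from the closeness of $y_1$ and $y_2$ dominates the $2^{2k}$ blow-up coming from differentiating the mollifier, incurring only a sub-linear $2cs^{1/2}$ correction in the exponent. The division by $t(x)$ is harmless because $t(x)\geq 1$, so it can only shrink $|a_1-a_2|$ further.
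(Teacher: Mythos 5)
Your proof is correct and is essentially the paper's argument: the paper applies the Lipschitz bound $|\phi_k(a)-\phi_k(b)|\leq 2^{2k}|a-b|$ pointwise inside the convolution integral against $\mu$, while you equivalently differentiate the convolution to get $\|\nabla\mu_k\|_\infty\lesssim 2^{2k}$ and then apply the mean value theorem; both hinge on $|a_1-a_2|\leq 2^{-s}$ (from $t(x)\geq 1$) and on $k\leq cs^{1/2}$ to convert $2^{2k-s}$ into $2^{-(s-2cs^{1/2})}$. The only cosmetic difference is that you absorb $\mu(\R^d)$ into an implicit constant, which the paper does silently as well.
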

\begin{proof}
 If $|x-y|\leq 2^{-s}$, then
\begin{align}|\phi_k(x)-\phi_k(y)| < 2^{2k}|x-y| < 2^{-(s-2ck)} \leq 2^{-(s-2s^{\half})}\end{align}
by our assumption on $k$.

Then 
\begin{align}
 &|\mu_k(\frac{y_1-x}{t(x)})-\mu_k(\frac{y_2-x}{t(x)})|\leq\\&
\int |\phi_k(\frac{y_1-x}{t(x)}-y)-\phi_k(\frac{y_2-x}{t(x)}-y)|\,d\mu(y)\leq\\&
2^{-(s-2cs^{\half})}
\end{align}
\end{proof}

\begin{lemma}\label{thm:scale3}
 Suppose that $\mathbb{E}_s f= 0$ and that $ k\leq c s^{\half}$, with $\mu_k$ defined as in Lemma \ref{thm:scale2} above. Then 
\begin{align}
 \|\mathcal{M} f\|_p\leq 2^{-(s-2cs^{\half})} \|f\|_p
\end{align}

\end{lemma}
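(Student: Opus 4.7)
The plan is to derive a pointwise majorization of $\mathcal{M}f(x)$ by the Hardy--Littlewood maximal function $f^*(x)$, weighted by the small factor $2^{-(s-2cs^{1/2})}$, and then invoke the $L^p$ boundedness of $f\mapsto f^*$ for $p>1$. The key insight is that Lemma \ref{thm:scale2} says the kernel $z\mapsto \mu_k((z-x)/t)$ varies by at most $2^{-(s-2cs^{1/2})}$ across any cube of sidelength $2^{-s}$, and the hypothesis $\mathbb{E}_s f=0$ means $f$ is orthogonal to constants on precisely such cubes, so a standard cancellation argument applies.

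First, since $\mu_k = \phi_k\ast\mu$ is a bounded function, a change of variables $z = x+ty$ gives
\begin{align*}
\int f(x+ty)\, d\mu_k(y) = t^{-d}\int f(z)\,\mu_k\!\left(\frac{z-x}{t}\right)\,dz.
\end{align*}
Partition $\R^d$ into dyadic cubes $\{Q\}$ of sidelength $2^{-s}$ and pick a center $z_Q\in Q$ for each. Because $\mathbb{E}_s f = 0$, we have $\int_Q f=0$, so subtracting the constant $\mu_k((z_Q-x)/t)$ from the kernel on each $Q$ changes nothing:
\begin{align*}
\int_Q f(z)\,\mu_k\!\left(\frac{z-x}{t}\right)dz \;=\; \int_Q f(z)\Bigl[\mu_k\!\left(\tfrac{z-x}{t}\right)-\mu_k\!\left(\tfrac{z_Q-x}{t}\right)\Bigr]\,dz.
\end{align*}
For $z\in Q$ we have $|z-z_Q|\leq 2^{-s}$, so Lemma \ref{thm:scale2} bounds the bracketed difference by $2^{-(s-2cs^{1/2})}$ uniformly in $x$ and in $t\in(1,2)$.

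Summing over the finitely many cubes that can contribute (namely, those meeting $x+t\cdot\supp(\mu)$, which lies in a ball $B(x,R)$ of radius $R$ independent of $t\in(1,2)$ since $\mu$ is compactly supported), I obtain the pointwise estimate
\begin{align*}
|\mathcal{M}f(x)| \;\leq\; t^{-d}\,2^{-(s-2cs^{1/2})}\int_{B(x,R)}|f|
\;\leq\; C\cdot 2^{-(s-2cs^{1/2})}\,f^{*}(x),
\end{align*}
using that $t^{-d}|B(x,R)|$ is bounded as $t\in(1,2)$. Taking $L^p$ norms and applying the Hardy--Littlewood maximal inequality (valid for the relevant range $p>1$) yields the claim.

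The only potentially delicate point is that the smoothness estimate of Lemma \ref{thm:scale2} is stated for differences $\mu_k((y_1-x)/t(x))-\mu_k((y_2-x)/t(x))$, while here I apply it with $y_1=z$, $y_2=z_Q$ playing the role of the two input points and with the fixed $t\in(1,2)$ from the supremum in $\mathcal{M}$; but this is exactly the content of that lemma. The rest is bookkeeping: the compactness of $\supp(\mu)$ and the boundedness of $t$ away from $0$ and $\infty$ both ensure that all the constants absorbed into $C$ are uniform in $x$ and $t$, so no further work is required to pass from the pointwise bound to the $L^p$ bound.
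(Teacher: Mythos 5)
Your proof is correct and follows essentially the same route as the paper: partition into cubes of sidelength $2^{-s}$, use $\mathbb{E}_s f=0$ to subtract a piecewise-constant kernel, and control the difference via Lemma \ref{thm:scale2}. The only (harmless) divergence is the last step, where you majorize by the Hardy--Littlewood maximal function and invoke its $L^p$ bound for $p>1$, while the paper applies H\"older's inequality directly; both yield the claimed decay up to constants.
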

\begin{proof}
Breaking into subintervals $I_j$ of center $y_j$ and length $2^{-s}$, we have $\sum_j \mathbb{E}_s(\mu_k(\frac{y_j-x}{t(x)})) \int_{I_j} f = 0$. Consequently

\begin{align}
 &\int |\mathcal{M}f(x)|^p \,dx =\\&
\int |\int f(y)\mu_k(\frac{y-x}{t(x)}) dy|^p \,dx =\\&
\int |\sum_j \int_{I_j} f(y)[\mu_k(\frac{y-x}{t(x)}) -\mu_k(\frac{y_j-x}{t(x)}) ]\,dy|^p \,dx 
\end{align}

set $\bar{\mu_k}(\frac{y-x}{t(x)}) = \mu_k(\frac{y_j-x}{t(x)})$ for $y\in I_j$. Then this becomes 
\begin{align}
 \int | \int f(y)[\mu_k(\frac{y-x}{t(x)}) -\bar{\mu_k}(\frac{y-x}{t(x)})]\,dy|^p \,dx 
\end{align}

An application of Holder, followed by the bound from Lemma \ref{thm:scale2}, furnishes us with

\begin{align}
 &\int | \int f(y)[\mu_k(\frac{y-x}{t(x)}) -\bar{\mu_k}(\frac{y-x}{t(x)})]\,dy|^p \,dx \leq\\&
 \int | ( \int |f(y)|^p)^{\frac{1}{p}} (\int |\mu_k(\frac{y-x}{t(x)}) -\bar{\mu_k}(\frac{y-x}{t(x)})|^q\,dy)^{\frac{1}{q}}|^p \,dx \leq\\& 
\|f\|_p^p 2^{-p(s-2cs^{\half})} \,dx  
\end{align}

\end{proof}

This next lemma is what allows us to leverage control over restricted $\mathcal{M}_k$ to some measure of control over the unrestricted $\tilde{\mathcal{M}}$.

\begin{lemma}\label{thm:scale4}
 Suppose that $\mathbb{E}_{\nu+s}(f)=0$. Then if $\|\mathcal{M}_k f\|_p\lesssim 2^{-k}\|f\|_p$, we have
\begin{align}\label{ten}
 \|\sup_{t\approx 2^{-\nu}}| \int f(x+ty)\,d\mu(y)|\|_p \leq c 2^{-a(p)s}\|f\|_p
\end{align}

\end{lemma}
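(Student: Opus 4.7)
The plan is to reduce $t\approx 2^{-\nu}$ to $\tau\in[1,2]$ by a rescaling of $f$, and then decompose $\mu$ at frequency $k_0\asymp\sqrt{s}$ so that the low-frequency piece is handled by Lemma~\ref{thm:scale3} (using the mean-zero hypothesis) while the high-frequency tail is summed using the hypothesized $L^p$ bound $\|\mathcal{M}_k f\|_p\lesssim 2^{-k}\|f\|_p$.

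First I would set $g(z):=f(2^{-\nu}z)$ and substitute $t=2^{-\nu}\tau$, $z=2^{\nu}x$. A direct computation gives
\begin{align*}
\int f(x+ty)\,d\mu(y) = \int g(z+\tau y)\,d\mu(y),
\end{align*}
so that $\|\sup_{t\approx 2^{-\nu}}|\int f(x+ty)\,d\mu(y)|\|_p = 2^{-\nu d/p}\|\mathcal{M}g\|_p$ and $\|f\|_p=2^{-\nu d/p}\|g\|_p$. Under this scaling the hypothesis $\mathbb{E}_{\nu+s}(f)=0$ becomes $\mathbb{E}_s(g)=0$, since cubes of side $2^{-(\nu+s)}$ in the $x$-coordinates are exactly cubes of side $2^{-s}$ in the $z$-coordinates; the operator-norm hypothesis is also unchanged because only $f$ has been rescaled and not $\mu$. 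The problem thus reduces to showing
\begin{align*}
\|\mathcal{M}g\|_p\lesssim 2^{-a(p)s}\|g\|_p \qquad \text{whenever }\mathbb{E}_s(g)=0.
\end{align*}

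Next I would choose a cutoff $k_0=\lfloor c\sqrt{s}\rfloor$ with $c$ small and telescope $\mu=\mu_{k_0}+\sum_{k\geq k_0}(\mu_{k+1}-\mu_k)$, splitting $\mathcal{M}g$ into a maximal operator for the smoothed measure $\mu_{k_0}$ plus a sum of high-frequency maximal operators. For the smoothed piece, Lemma~\ref{thm:scale3} applies (since $k_0\leq c\sqrt{s}$ and $\mathbb{E}_s(g)=0$) and yields a bound of order $2^{-(s-2c\sqrt{s})}\|g\|_p$. For each high-frequency piece the hypothesis $\|\mathcal{M}_k g\|_p\lesssim 2^{-k}\|g\|_p$ applies, and geometric summation over $k\geq k_0$ contributes a total of order $2^{-c\sqrt{s}}\|g\|_p$. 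Combining these two estimates and then undoing the initial rescaling establishes the lemma, with the effective decay $2^{-c\sqrt{s}}$ playing the role of $2^{-a(p)s}$ in the statement.

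The main obstacle is inherent to this approach: the constraint $k\leq c\sqrt{s}$ in Lemma~\ref{thm:scale3}, which traces back to the Lipschitz bound $\|\phi_k'\|_\infty\leq 2^{2k}$ used in Lemma~\ref{thm:scale2}, forces the split to occur at $k_0\asymp\sqrt{s}$ and thereby caps the decay rate at roughly $2^{-c\sqrt{s}}$ rather than $2^{-s}$. To recover a genuine $2^{-a(p)s}$ rate one would need a strengthened version of Lemma~\ref{thm:scale3} valid for $k$ up to a constant multiple of $s$. Apart from this square-root loss, the remaining bookkeeping consists of verifying that the rescaling acts only on $f$ and the parameter $t$, so that the operator-norm hypothesis and Lemma~\ref{thm:scale3} can be invoked in their original (unrescaled) forms.
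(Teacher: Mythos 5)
Your proposal follows essentially the same route as the paper: rescale to reduce to $\nu=0$, split the frequency decomposition of $\mu$ at $k_0\asymp\sqrt{s}$, handle the low frequencies via Lemma \ref{thm:scale3} (using $\mathbb{E}_s f=0$) and sum the high frequencies using the hypothesized $2^{-k}$ operator bounds. Your closing observation is also accurate: the paper's own proof likewise only delivers decay of order $2^{-c\sqrt{s}}$, not the $2^{-a(p)s}$ literally claimed in the statement, so the square-root loss you flag is present in the original argument as well.
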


\begin{proof}
By rescaling, we may assume that $\nu=0$.

Then we compute
\begin{align}
 &\|\sup_{t\approx 1} | \int f(x+ty)\,d\mu(y)|\|_p\leq\\&
c \sum_k \| \int f(x+ty)\,d\mu_k(y)\|_p\lesssim\\&
\sum_{k < c\sqrt{s}} \| \int f(x+ty)\,d\mu_k(y)\|_p+\sum_{k \geq c\sqrt{s}} \| \int f(x+ty)\,d\mu_k(y)\|
\end{align}
 
We may now apply Lemma \ref{thm:scale3} to the left term, bounding it by $\sum_{k < c\sqrt{s}} 2^{s-2cs^{\half}}\| f\|_p\leq 2^{s-3cs^{\half}}\|f\|_p$. 

At the same time, our bounds on $\mathcal{M}_k$ give us that the right term is $\leq\ \sum_{k\geq c\sqrt{s}} 2^{-ck}\|f\|_p\leq 2^{-cs^{\half}}\|f\|_p$

\end{proof}

The following lemma gives us the bound we will ultimately tweak to complete the argument.

\begin{lemma}\label{thm:scale5}
 Set $\triangle_k f = \mathbb{E}_{k+1}f-\mathbb{E}_{k}f$. Suppose that $\supp(f)\subset[0,1]$. Then 
\begin{align}
 \|\tilde{\mathcal{M}}f\|_p \leq C \|f\|_p
\end{align}

\end{lemma}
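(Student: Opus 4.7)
The plan is to decouple the supremum over all scales $t>0$ by combining a dyadic scale decomposition with a dyadic martingale decomposition of $f$. Define the single-scale operator $T_\nu f(x):=\sup_{t\in[2^{-\nu-1},2^{-\nu}]}|\int f(x+ty)\,d\mu(y)|$, so that $\tilde{\mathcal{M}}f=\sup_{\nu\in\mathbb{Z}}T_\nu f$. For each $\nu$, split $f=\mathbb{E}_\nu f+\sum_{k\geq\nu}\triangle_k f$; the compact support of $f$ makes this decomposition well-posed.

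The low-frequency piece is handled pointwise: by Lemma \ref{thm:scale1}, $T_\nu(\mathbb{E}_\nu f)(x)\leq 5^d f^*(x)$ with the bound uniform in $\nu$, so $\sup_\nu T_\nu(\mathbb{E}_\nu f)\leq 5^d f^*$ is bounded on $L^p$ by Hardy--Littlewood. For the high-frequency remainder, reindex $k=\nu+s$ ($s\geq 0$) and commute the supremum past the sum:
\begin{align*}
\sup_\nu T_\nu\Bigl(\sum_{k\geq\nu}\triangle_k f\Bigr)\leq\sum_{s\geq 0}\sup_\nu T_\nu(\triangle_{\nu+s}f).
\end{align*}
Since $\mathbb{E}_{\nu+s}(\triangle_{\nu+s}f)=0$ by construction, Lemma \ref{thm:scale4} furnishes the essential scale-dependent decay $\|T_\nu(\triangle_{\nu+s}f)\|_p\leq c\,2^{-a(p)s}\|\triangle_{\nu+s}f\|_p$. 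Replacing $\sup_\nu$ by an $\ell^p$-sum in $\nu$, and then invoking the dyadic Littlewood--Paley inequality together with $\ell^2\hookrightarrow\ell^p$ (valid for $p\geq 2$), gives
\begin{align*}
\Bigl\|\sup_\nu T_\nu(\triangle_{\nu+s}f)\Bigr\|_p^p\leq c^p\,2^{-a(p)sp}\sum_k\|\triangle_k f\|_p^p\lesssim 2^{-a(p)sp}\|f\|_p^p,
\end{align*}
and summing the resulting geometric series in $s\geq 0$ yields $\|\tilde{\mathcal{M}}f\|_p\lesssim\|f\|_p$.

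The main technical obstacle is the martingale summability step $\sum_k\|\triangle_k f\|_p^p\lesssim\|f\|_p^p$: this is immediate from dyadic Littlewood--Paley when $p\geq 2$ but fails in this form for $p<2$. In the range $p\in(k/(k-1),2)$ (which is relevant once $k\geq 3$) one would have to replace the crude passage from $\sup_\nu$ to $\ell^p$-sum with a vector-valued Fefferman--Stein-type maximal inequality applied to the sequence $(T_\nu(\triangle_{\nu+s}f))_\nu$, or else interpolate between the $p=2$ version of the above and a trivial endpoint. A secondary issue is the uniform treatment of very large scales (negative $\nu$), but the compact support of both $f$ and $\mu$ confines the corresponding operators to a bounded region of $x$-space, so the same decomposition covers the range $\nu<0$ without modification.
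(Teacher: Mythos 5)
Your proposal follows the paper's argument almost exactly: split off $\mathbb{E}_\nu f$ and control it by Lemma \ref{thm:scale1} plus Hardy--Littlewood, reindex the martingale differences as $\triangle_{\nu+s}f$, invoke Lemma \ref{thm:scale4} for the $2^{-a(p)s}$ decay, and close with a geometric series in $s$ and dyadic Littlewood--Paley. The one place you diverge is the bookkeeping step where the supremum over $\nu$ is traded for a sum. You majorize $\sup_\nu$ by the $\ell^p(\nu)$ sum, which lands you on $\sum_k\|\triangle_k f\|_p^p$ and, as you correctly flag, forces $p\geq 2$; since the theorem is ultimately wanted for $p>\frac{k}{k-1}$, which dips below $2$ once $k\geq 3$, this is a genuine restriction rather than a cosmetic one. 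The paper instead majorizes $\sup_\nu$ pointwise by the $\ell^2(\nu)$ sum, pulls the sum over $s$ outside by the triangle inequality in $\ell^2(\nu)$, applies Lemma \ref{thm:scale4} termwise, and then reassembles the square function $\bigl(\sum_\nu|\triangle_{\nu+s}f|^2\bigr)^{1/2}$ via Minkowski before invoking Littlewood--Paley; keeping the $\ell^2$ structure intact is exactly what lets the square-function estimate (valid for all $1<p<\infty$) absorb the sum over $\nu$. This is precisely the ``vector-valued / interpolation'' fix you gesture at in your final paragraph without carrying it out.

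One caveat worth knowing: the paper's own version is not entirely free of the tension you identified. Its two Minkowski applications point in opposite directions --- $\|(\sum_\nu g_\nu^2)^{1/2}\|_p\leq(\sum_\nu\|g_\nu\|_p^2)^{1/2}$ requires $p\geq 2$, while the reverse inequality used to reconstitute the square function requires $p\leq 2$ --- so as literally written it is only airtight at $p=2$. If you want a clean statement in the range $p\in(\frac{k}{k-1},2)$, you should either run the $\ell^2(\nu)$-majorization through a genuine vector-valued maximal inequality or interpolate from the $p=2$ case, as you yourself suggest. So: same approach, acknowledged gap at $p<2$, and the paper's $\ell^2$-in-$\nu$ bookkeeping is the intended (if not fully watertight) repair.
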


\begin{proof}
 By Lemma \ref{thm:scale1} and the Hardy-Littlewood differentiation theorem, $\|\mathbb{E}_{\nu} f\|_p\leq c ||f||_p$. So writing $f = \mathbb{E}_{\nu} f + \sum_{k\geq\nu} \int \triangle_k f(x+ty)\,d\mu(y)$
 we are led to bound 
 \[\|\sup_{\nu\geq 0}\sup_{2^{-\nu}\leq t\leq 2^{-\nu+1}} |\sum_{k\geq\nu} \int \triangle_kf(x+ty)\,d\mu_k(y)\|_p\]

We have that 

\begin{align} 
&\|\sup_{\nu\geq 0}\sup_{2^{-\nu}\leq t\leq 2^{-\nu+1}} \left|\sum_{k\geq\nu} \int \triangle_kf(x+ty)\,d\mu_k(y)\right|\|_p\leq\\&
\|\left(\sum_{\nu\geq 0}\left[\sum_{s=0}^{\infty} \sup_{t\approx 2^{-\nu}} | \int \triangle_{\nu+s}f(x+ty)\,d\mu_k(y)|\right]^2\right)^\frac{1}{2}\|_p\leq\\&
\|\sum_{s=0}^{\infty} \left[ \sum_{\nu\geq0}  \sup_{t\approx 2^{-\nu}} | \int \triangle_{\nu+s}f(x+ty)\,d\mu_k(y)|^2\right]^{\frac{1}{2}}\|_p\leq\\&
\sum_{s=0}^{\infty} \left( \sum_{\nu\geq0} \|\sup_{t\approx 2^{-\nu}} | \int \triangle_{\nu+s}f(x+ty)\,d\mu_k(y)|\|_p^2\right)^{\frac{1}{2}}
\end{align}
where the last two lines follow via Minkowski's inequality.

Applying Lemma \ref{thm:scale4} to $\triangle_{\nu+s} f$, we have that this is bounded by a multiple of
\begin{align}
 \sum_{s=0}^{\infty} \left(\sum_{\nu\geq0} 2^{-a(p)s}\|\triangle_{\nu+s} f\|_p^2\right)^{\frac{1}{2}}
\end{align}

One last application of Minkwoski's useful inequality, and this becomes
\begin{align}
 &\sum_{s=0}^{\infty} 2^{-\frac{a(p)}{2} s} \|\left(\sum_{\nu\geq0} |\triangle_{\nu+s} f|^2\right)^{\frac{1}{2}}\|_p\lesssim&\\
\sum_{s=0}^{\infty} 2^{-\frac{a(p)}{2} s}\|f\|_p\lesssim \|f\|_p
\end{align}
where the last line follows from Littlewood-Paley theory.

\end{proof}

We now show that the restriction to support in $[0,1]^d$ may be dropped. Though we include it here, it is identical to Lemma \ref{thm:d6} of the previous section. 

\begin{lemma}\label{thm:scale6}Suppose that for $f$ supported in $[0,1]^d$, $\|\tilde{\mathcal{M}} f\|_p\leq C \|f\|_p$. Then in fact, $\tilde{\mathcal{M}}$ is bounded from $L^p(\R^d)$ to $L^p(\R^d)$.

\end{lemma}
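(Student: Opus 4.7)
The plan is to mimic Lemma \ref{thm:d6} essentially verbatim, as the author explicitly signals when remarking that the two statements share the same proof. I would enumerate the unit dyadic cubes $(J_i)_{i\in\mathbb{N}}$ of $\R^d$ and decompose a general $f\in L^p(\R^d)$ as $f = \sum_i f_i$ with $f_i := f\cdot \mathbf{1}_{J_i}$. Since $\tilde{\mathcal{M}}$ commutes with translation, applying the hypothesis after translating each $J_i$ onto $[0,1]^d$ yields
\[
\|\tilde{\mathcal{M}} f_i\|_p \leq C \|f_i\|_p \quad \text{for every } i.
\]

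Next I would invoke subadditivity of the supremum defining $\tilde{\mathcal{M}}$ to obtain the pointwise majorization $\tilde{\mathcal{M}} f \leq \sum_i \tilde{\mathcal{M}} f_i$, and then use that $\supp(\mu)$ is compact, so that each $\supp(\tilde{\mathcal{M}} f_i)$ is contained in an $O(1)$-enlargement of $J_i$; this gives $5^d$-bounded overlap of the supports exactly as in Lemma \ref{thm:d6}. Combining finite overlap with the $\ell^p$-triangle inequality produces
\[
\|\tilde{\mathcal{M}} f\|_p^p \;\leq\; 5^d \sum_i \|\tilde{\mathcal{M}} f_i\|_p^p \;\leq\; 5^d C^p \sum_i \|f_i\|_p^p \;=\; 5^d C^p \|f\|_p^p,
\]
where the last equality uses disjointness of the supports of the $f_i$. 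Taking $p$-th roots completes the proof with constant $5^{d/p}C$.

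The one delicate point, and the place where the argument genuinely differs in flavor from Lemma \ref{thm:d6}, is the bounded-overlap claim. For the restricted operator $\mathcal{M}$ the parameter $t$ ran over the bounded interval $[1,2]$ and the overlap estimate was immediate from compactness of $\supp(\mu)$. For $\tilde{\mathcal{M}}$ the parameter $t$ ranges over all of $(0,\infty)$, so one must justify that the pieces $\tilde{\mathcal{M}} f_i$ still localize near $J_i$. I would handle this either by a dyadic decomposition in $t$ — controlling each scale by compactness of $\supp(\mu)$ and summing using the decay from Section \ref{ch:scaling} — or, more economically, by observing that the hypothesis $\|\tilde{\mathcal{M}} f_i\|_p \leq C\|f_i\|_p$ already encodes an $L^p$-averaged localization sufficient for the packing estimate to close in the same fashion as Lemma \ref{thm:d6}.
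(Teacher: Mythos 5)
Your argument is the paper's own proof of this lemma, essentially line for line: unit-cube decomposition, subadditivity, bounded overlap of supports, $\ell^p$ summation over disjointly supported pieces. The issue is that the step you yourself single out as ``the one delicate point'' is not a stylistic wrinkle but a genuine gap, and neither of your two proposed patches resolves it as stated. The bounded-overlap claim is simply false for $\tilde{\mathcal{M}}$: since $t$ ranges over all of $(0,\infty)$, the set where $\tilde{\mathcal{M}}f_i$ can be nonzero is $\bigcup_{t>0}\bigl(J_i - t\,\supp(\mu)\bigr)$, an unbounded cone-like region, not a $5^d$-fold enlargement of $J_i$. Concretely, for $x$ far from the center $c_i$ of $J_i$, taking $t\sim|x-c_i|$ makes $x+t\,\supp(\mu)$ reach $J_i$; the set $\{y:x+ty\in J_i\}$ is then a cube of side $t^{-1}$, so the ball condition (\ref{ballcondition}) gives only $\tilde{\mathcal{M}}\mathbf 1_{J_i}(x)\lesssim C_H\,|x-c_i|^{-\alpha}$. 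This tail is not compactly supported, and since $\alpha<d$ it is not even obviously $p$-th power summable over the lattice of cubes for the exponents $p>\frac{k}{k-1}$ in question, so the finite-overlap packing argument does not close.

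Of your two suggested repairs, the second (``the hypothesis already encodes an $L^p$-averaged localization'') is not an argument; the hypothesis controls $\|\tilde{\mathcal{M}}f_i\|_p$ globally but says nothing about where that mass sits, which is exactly what the overlap step needs. The first suggestion is the right direction but must actually be carried out: split $\tilde{\mathcal{M}}f_i$ into the scales $t\lesssim 1$, where the support really is contained in an $O(1)$-neighborhood of $J_i$ and the Lemma \ref{thm:d6} argument applies verbatim, and the scales $t\gtrsim 1$, where one must produce a separate global bound --- for instance by showing that the large-scale part of $\sup_{t\gtrsim 1}\int|f|(x+ty)\,d\mu(y)$ is dominated by (a constant times) the Hardy--Littlewood maximal function $f^*(x)$, in the spirit of Lemma \ref{thm:scale1}, and invoking the classical maximal theorem. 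Without some such decomposition the proof is incomplete; to your credit, you detected a defect that the paper's own proof of this lemma (and of Lemma \ref{thm:d6}'s transplantation to the unrestricted operator) passes over in silence.
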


\begin{proof}
Let again $(J_i)$ denote an enumeration of the dyadic intervals of sidelength $1$, and write $f =\sum f$, $\supp(f) \subset J_i$. Then using the subadditivity of $\tilde{\mathcal{M}}$, and the fact that $\supp\tilde{\mathcal{M}}f$ 
increases in size by at most $5^d$ (since $\supp(f\ast\mu)\subset\supp(f)+\supp(\mu)$), we have
\begin{align}
 \|\tilde{\mathcal{M}} f\|_p^p \leq \|\sum_i \tilde{\mathcal{M}} f\|_p^p\leq\\&
\leq 5^d \sum \|\tilde{\mathcal{M}}f\|_p^p
\end{align}

Applying now the hypothesized bounds and condensing the sum into $\|f\|_p^p$, we're done.
\end{proof}


\bibliographystyle{plainnat}

\bibliography{biblio.bib}

%
%
%

\vskip0.5in

\noindent Marc Carnovale
\\ The Ohio State University \\ 231 W. 18th Ave\\ Columbus Oh, 43201 United States\\ {\em{Email: carnovale.2@osu.edu}}

\end{document}